\def\www#1{\url{#1}}
\newcommand{\N}{\ensuremath{\mathbb{N}}}   
\newcommand{\R}{\ensuremath{\mathbb{R}}}
\newcommand{\Half}{\ensuremath{\mathcal{H}}}   
\DeclareMathOperator{\im}{im} 
\DeclareMathOperator{\Map}{Map} 
\DeclareMathOperator{\Cayl}{Cay}
\DeclareMathOperator{\supp}{supp}
\DeclareMathOperator{\Aut}{Aut}
\DeclareMathOperator{\Eorl}{(E^{or})^\ell}
\DeclareMathOperator{\Eor}{E^{or}}
\DeclareMathOperator{\ellalt}{\ell_{alt}^\infty(\mathcal{P},\R)}
\DeclareMathOperator{\cat0}{CAT(0)}
\newtheorem{theorem}{Theorem}[section]
\newtheorem*{theorem*}{Theorem}
\newtheorem{main}[theorem]{Main Theorem}
\newtheorem*{main*}{Main Theorem}
\newtheorem{lemma}[theorem]{Lemma}
\newtheorem*{corollary-non}{Corollary}
\theoremstyle{definition}
\newtheorem{definition}[theorem]{Definition}
\newtheorem{defthm}[theorem]{Definition/Theorem}
\newtheorem{notation}[theorem]{Notation}
\newtheorem{remark}[theorem]{Remark}
\newtheorem{setup}[theorem]{Setup}
{\begin{proof}[Beweis]}
	{\end{proof}}
\newtheorem{example}[theorem]{Example}
\newcommand{\qm}{quasi-median property}
\newcommand{\weight}{$\ell$-weight} 
\newcommand{\step}{$\ell$-fragment}
\newcommand{\stable}{$\Phi$-stable} 
\begin{document}
	
\title[A vanishing criterion for products in bounded cohomology]%
{\makebox[0pt]{A vanishing criterion for cup products and}\\
	 Massey products in bounded cohomology}

\author[]{Franziska Hofmann}
\address{Fakult\"{a}t f\"{u}r Mathematik, Universit\"{a}t Regensburg, Regensburg, Germany}
\email{franziska2.hofmann@ur.de}

\thanks{}

\keywords{bounded cohomology, cup product, quasimorphisms, median graphs}
\subjclass[2020]{20E08, 20F65, 18G90, 05E18, 57M60}

\date{\today.\ 
	This work was supported by the CRC~1085 \emph{Higher Invariants}
	(Universit\"at Regensburg, funded by the~DFG)}

\phantom{.}
\vspace{-\baselineskip}

\maketitle

\vspace{-1.2\baselineskip}

\begin{abstract}
	We give a uniform vanishing criterion for products in bounded cohomology. This allows us to reprove and extend previous vanishing results for cup products and Massey triple products in the bounded cohomology of free groups and in the equivariant bounded cohomology of group actions on $\cat0$ cube complexes.
\end{abstract}

\section{Introduction}

Bounded cohomology of groups has many interesting applications, but it is very hard to compute in general. One key example is the computation of bounded cohomology of non-abelian free groups $F$ with trivial real coefficients.
We know that bounded cohomology of $F$ is infinite dimensional in degree 2 via quasimorphisms~\cite{Brooks} and 3 via hyperbolic geometry~\cite{Soma}, but for now it is unknown whether it is trivial in higher degrees or not~\cite[Question~16.3]{BIMW}.
Classes in degree 2 are well studied as each of them is represented by the coboundary of a quasimorphism \cite[Section~2.6]{frigerio}.
Hence, one might try to use products with classes in degree 2 to understand classes in higher degrees. 
Although it is not clear whether cup products with arbitrary classes in degree~$2$ vanish, several weaker vanishing results are known for such products~\cite{BM,FF,Heuer,AB}.

We wish to highlight the following vanishing result of Amontova and Bucher~\cite{AB}:

\begin{theorem*}[{\cite[Theorem~A]{AB}}]
	Let $k > 0$ and $\alpha \in H_b^k(F;\R)$ be arbitrary. Let $\phi\colon F\to \R$ be a quasimorphism. Then the cup product
	\begin{align*}
		\cup \colon H_b^2(F;\R)\times H_b^k(F;\R) &\to H_b^{k+2}(F;\R),\\
		([\delta^1 \widehat{\phi}],\alpha) &\mapsto [0]
	\end{align*}
	vanishes whenever
	\begin{enumerate}
		\item $\phi$ is a $\Delta$-decomposable quasimorphism, or
		\item $\phi$ is a Brooks quasimorphism.
	\end{enumerate}
\end{theorem*}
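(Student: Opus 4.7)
The plan is to apply the uniform vanishing criterion developed earlier in this paper to the action of $F$ on its Cayley tree $T=\Cayl(F,S)$ with respect to a free generating set $S$. Since a tree is a $1$-dimensional $\cat0$ cube complex and in particular a median graph, the half-space language of the criterion applies with $\Ps$ modelled on the set of oriented edges of $T$.

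The first step is to reinterpret the cocycle $\delta^1\widehat{\phi}$ geometrically on $T$. For a quasimorphism $\phi\colon F\to\R$, this defect is an $F$-equivariant bounded $2$-cocycle on $F$; the aim is to show that, when $\phi$ is either Brooks or $\Delta$-decomposable, $\delta^1\widehat{\phi}$ admits (up to a bounded coboundary) a representative in $\ellalt$, in such a way that the resulting cochain is \stable\ and arises from an \step\ of bounded \weight, so that the hypotheses of the criterion are satisfied.

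For a Brooks quasimorphism $\phi_w$ I would use the description of $\phi_w(g)$ as a signed count of occurrences of the reduced word $w$ along the geodesic from $1$ to $g$ in $T$. The defect $\delta^1\widehat{\phi_w}(g,h)$ then telescopes as a uniformly bounded sum indexed by edges of the tripod with vertices $1,g,gh$; since each edge of $T$ is itself a half-space, choosing consistent orientations exhibits $\delta^1\widehat{\phi_w}$ as an alternating bounded function on $\Ps$. For a $\Delta$-decomposable quasimorphism, the defining decomposition along geodesic triangles of $T$ expresses $\delta^1\widehat{\phi}$ essentially by construction as a sum of local contributions supported on the half-spaces crossed by the sides of the triangle, again producing the desired element of $\ellalt$.

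The main obstacle I expect is the bookkeeping involved in passing back and forth between the bar complex of $F$ and the half-space model on $T$, and in verifying the \stable\ and \step\ conditions uniformly in the two cases: alternation, boundedness and equivariance of the resulting cochain must all hold simultaneously, and the identification must be compatible with cup products in order for the criterion to apply. Once this dictionary is in place, the vanishing $[\delta^1\widehat{\phi}]\cup\alpha=0$ for arbitrary $\alpha\in H_b^k(F;\R)$ follows as an immediate application of the criterion, and the same scheme should extend to the Massey triple product statements announced in the abstract.
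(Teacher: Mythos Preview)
Your proposal has the right high-level idea---apply the Main Theorem to $F$ acting on a Cayley graph---but two things are off in ways that would block the argument.

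First, the roles are inverted. The criterion asks you to exhibit $\phi$ itself as a weight quasimorphism $f_\mathcal{W}$ for some \weight\ $\mathcal{W}$, and then to check that the \emph{other} factor $\alpha$ is \stable. You instead propose to massage $\delta^1\widehat{\phi}$ into an element of $\ellalt$ and verify stability for it; but $\ellalt$ is the space of alternating bounded functions on pieces, not a cochain group, and the \stable\ hypothesis in Theorem~\ref{main:trivialcupproduct} concerns $\alpha$, not $[\delta^1\widehat{\phi}]$. In both applications the family $\Phi$ consists only of identity maps (because each $P(x,y)$ is a singleton), so every class is automatically \stable\ and there is nothing to verify on that side; the actual work is constructing $\mathcal{W}$ and checking the five axioms of an \weight.

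Second, your choice of graph fails for the $\Delta$-decomposable case. The $\Delta$-decomposition is a combinatorial datum on $\mathcal{P}^\ast$, not on geodesics in the tree $\Cayl(F,S)$: the pieces can be long words in $S$, and the $\Delta$-triangle of $(g,h)$ is not a geodesic triangle in $T$. The paper instead works in $\Cayl(F,\mathcal{P})$, taking $P(x,y)$ to be the \emph{single} path determined by $\Delta(x^{-1}y)$---which need not be a geodesic---and uses the $1$-weight $\mathcal{W}(e)=\lambda\bigl(\alpha(e)^{-1}\omega(e)\bigr)$. The \qm\ for this family of paths is precisely the bounded-$r$-part axiom in Definition~\ref{def:Deltadecomp}. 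For Brooks quasimorphisms one does work on $\Cayl(F,S)$, but with an \weight\ supported on \emph{connected} $\ell$-tuples of edges, setting $\mathcal{W}(e_1,\dots,e_\ell)=\chi_\omega\bigl(\lambda(e_1)\cdots\lambda(e_\ell)\bigr)$; no half-space language is needed. The transfer to $H_b^*(F;\R)$ is then via the orbit map, which is an isomorphism since stabilisers are trivial.
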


The proof of this vanishing result uses aligned cochains. They provide a subcomplex of the bar resolution/inhomogeneous complex~\cite[Section~3]{AB}. The result has been generalised in several directions. First, Marasco~\cite{Marasco} considered Massey triple products in the bounded cohomology of non-abelian free groups. One can construct Massey products of three cohomology classes whenever the cup products of the first with the second and the second with the third class vanish. As the vanishing of cup products is known for classes represented by the coboundary of a $\Delta$-decomposable quasimorphism, he considers Massey triple products where the second class is given by the coboundary of such a quasimorphism and obtained the following result.

\begin{theorem*}[{\cite[Theorem~1]{Marasco}}]
	Let $\phi$ be a $\Delta$-decomposable quasimorphism, let $k_1,\, k_2>0$ and $\alpha_1 \in H_b^{k_1}(F;\R)$ and $\alpha_2\in H_b^{k_2}(F;\R)$. Then the Massey triple product
	$\langle \alpha_1, [\delta^1 \widehat{\phi}], \alpha_2\rangle$ is trivial.
\end{theorem*}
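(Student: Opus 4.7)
The Massey triple product $\langle \alpha_1, [\delta^1\widehat{\phi}], \alpha_2 \rangle$ lives in $H_b^{k_1+k_2+1}(F;\R)$ modulo the indeterminacy $\alpha_1 \cup H_b^{k_2+1}(F;\R) + H_b^{k_1+1}(F;\R) \cup \alpha_2$. To prove it is trivial, the plan is to exhibit bounded cochain representatives for which the defining Massey cocycle is itself a bounded coboundary. This requires sharpening the Amontova--Bucher vanishing theorem at the cochain level rather than merely at the level of cohomology.

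First I would restrict to the subcomplex of aligned cochains of Amontova--Bucher and choose aligned bounded cocycle representatives $a_1, a_2$ of $\alpha_1, \alpha_2$. Next I would reopen the proof of their theorem: it constructs, for every aligned bounded cocycle $a$, explicit bounded aligned cochains $H_R(a)$ and $H_L(a)$ with $\delta H_R(a) = a \cup \delta^1 \widehat{\phi}$ and $\delta H_L(a) = \delta^1 \widehat{\phi} \cup a$. Setting $u := H_R(a_1)$ and $v := H_L(a_2)$, the cochain
\[
c \ := \ u \cup a_2 \ - \ (-1)^{k_1}\, a_1 \cup v
\]
is bounded and, by the Leibniz rule together with the associativity of $\cup$, a cocycle; it represents the Massey triple product at the cochain level.

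The main step, and the principal obstacle, is to produce a bounded primitive $w$ with $\delta w = c$. The natural tactic is to iterate the Amontova--Bucher construction: build a bounded aligned ``double primitive'' $W$ for the triple cup product $a_1 \cup \delta^1\widehat{\phi} \cup a_2$, and then show that the two natural factorisations $W = u \cup a_2 + \delta W_1$ and $W = (-1)^{k_1} a_1 \cup v + \delta W_2$ can be arranged to differ only by a bounded coboundary, so that $w := W_2 - W_1$ primitivises $c$. The combinatorial content, namely that the alignment condition propagates through this iterated construction while preserving boundedness, is the core technical difficulty; it reduces to an analysis of reduced words, now carried out on tuples $(g_1, \ldots, g_{k_1+k_2+1})$ in which the alignment data on the $\alpha_1$ side and on the $\alpha_2$ side must be tracked simultaneously. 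The combinatorics of alternating reduced words makes it plausible that the two primitives can be chosen compatibly, but this compatibility is exactly what has to be verified.
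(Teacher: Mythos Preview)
Your plan follows Marasco's original aligned-cochain route rather than the paper's; the paper instead realises $\phi$ as a weight quasimorphism on the Cayley graph $\Cayl(F,\mathcal{P})$ associated with the $\Delta$-decomposition and deduces the statement from the general Theorem~\ref{main:trivialMassey}. That change of framework is not essential, but your proposal has a real gap at the step you yourself flag as the core difficulty.

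The ``double primitive'' manoeuvre is not a reduction. Asking for a bounded $W$ with bounded $W_1,W_2$ such that $W = u\cup a_2 + \delta W_1 = (-1)^{k_1} a_1\cup v + \delta W_2$ is logically equivalent to asking that $[u\cup a_2] = [(-1)^{k_1} a_1\cup v]$ in $H_b$, i.e.\ that $[c]=0$; nothing has been gained, and ``iterating'' the Amontova--Bucher construction does not produce such a $W$ in any evident way. What actually drives the argument---both in Marasco and in the paper's proof of Theorem~\ref{main:trivialMassey}---is the \emph{internal} structure of the Amontova--Bucher primitives, which you treat as black boxes $u,v$. In the paper's notation they read $\beta_1 = (-1)^{k_1}(\zeta_1\cup f - \delta\nu)$ and $\beta_2 = f\cup\zeta_2 + \delta\eta$, with $f$ the unbounded quasimorphism and $\nu,\eta$ explicit unbounded cochains. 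Plugging these into the Massey cocycle, the unbounded pieces $\zeta_1\cup f\cup\zeta_2$ cancel on the nose, and one is left with $\delta\bigl(\zeta_1\cup\eta + (-1)^{k_1}\nu\cup\zeta_2\bigr)$: already a coboundary, but of a possibly unbounded cochain. The only remaining work is to exhibit an explicit correction $\kappa$ making $\zeta_1\cup\eta + (-1)^{k_1}\nu\cup\zeta_2 - \delta\kappa$ bounded; the paper writes $\kappa$ as a bilinear version of the same averaging formula that defined $\eta$ and $\nu$ and bounds it via Lemma~\ref{lemma:Hilfe}. Your outline never reaches this cancellation and proposes no candidate for $\kappa$, so the step you describe as ``plausible'' is in fact the whole proof.
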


The proof of Marasco also uses the aligned cochain complex and explicit formulas given by Amontova and Bucher~\cite{AB}.

On the other hand, Br\"uck, Fournier-Facio and L\"oh~\cite{BFFL} generalised the notion of Brooks quasimorphisms and considered a special kind of counting quasimorphisms of group actions on $\cat0$ cube complexes, called median quasimorphisms. They obtained the following vanishing result:

\begin{theorem*}[{\cite[Theorem~3.23]{BFFL}}]
	Let $\Gamma$ be a group acting on a finite-dimensional $\cat0$ cube complex $X$ with finite staircase length. Let $s$ be an $\Half$-segment in $X$, and let $f_s$ be the corresponding median quasimorphism of $\Gamma\curvearrowright X$. Then, for every class $\zeta\in H_{\Gamma,b}^n(X;\R)$ that is non-transverse to
	the orbit $\Gamma s$, the cup product $[\delta^1f_s] \cup\zeta\in  H_{\Gamma,b}^{2+n}(X; \R)$ is trivial. 
\end{theorem*}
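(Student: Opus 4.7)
The plan is to derive this statement by verifying the hypotheses of the uniform vanishing criterion developed in this paper, with $X$ playing the role of the underlying median object and the $\Half$-segment $s$ playing the role of the combinatorial pattern whose occurrences are counted by $f_s$. Rather than re-run the argument of Br\"uck--Fournier-Facio--L\"oh from scratch, I would set up the equivariant bounded cochain complex of $\Gamma\curvearrowright X$ so as to fit the abstract framework, and then appeal to the general result.

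First, I would choose convenient cochain representatives. The class $[\delta^1 f_s]$ is canonically represented by a bounded $2$-cocycle whose value on a triple $(x_0,x_1,x_2)$ is a signed count of translates of $s$ occurring in the hyperplane sequence interpolating the three points via their median. The non-transversality hypothesis would furnish a bounded cocycle $c$ representing $\zeta$ that vanishes on tuples which sit transverse to a translate of $s$ in the excluded sense. Consequently, the cup product $\delta^1 f_s\cup c$ on a tuple $(x_0,\ldots,x_{n+2})$ becomes a controlled signed sum, indexed by translates $\gamma s$ whose wall separates the initial triple, of values of $c$ on the terminal $(n+1)$-subtuple.

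I would then define an explicit primitive $b$ by a median ``truncation'' formula: for each translate $\gamma s$ whose wall separates the first two arguments, replace $c$ by its value on a projected subtuple on the appropriate side of that wall, sum these with alternating signs, and average over the orbit. A direct alternating-sum manipulation, patterned on the arguments of Amontova--Bucher and Marasco in the free group case, would yield $\delta b = \delta^1 f_s\cup c$; the non-transversality of $c$ to $\Gamma s$ is exactly what makes the would-be crossing terms cancel in pairs rather than surviving.

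The main obstacle will be proving that $b$ is $L^\infty$-bounded. A priori, the number of translates $\gamma s$ with a wall separating a given pair of vertices is unbounded, so the naive sum defining $b$ need not be a bounded cochain. This is precisely where the finite staircase length hypothesis enters: it uniformly bounds the length of any chain of nested translates of $s$ in $X$, and hence the number of nonzero summands contributing to $b(x_0,\ldots,x_{n+1})$. Combined with the defect bound on $f_s$ and the finite dimension of $X$, this should yield $\|b\|_\infty \le C\|c\|_\infty$ for a constant $C$ depending only on $(X,\Gamma,s)$. Equivariance of $b$ follows from summing over the whole orbit $\Gamma s$, and the resulting $b$ is exactly the witness required by the paper's vanishing criterion to conclude $[\delta^1 f_s]\cup\zeta = 0$.
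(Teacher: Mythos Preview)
Your opening sentence states the right plan---fit the median setting into the paper's abstract framework and invoke the Main Theorem---but the body of the proposal does not execute it. Theorems~\ref{main:trivialcupproduct} and~\ref{main:trivialMassey} do not ask the user to produce a primitive; they ask for a coherent pair $(P,\Phi)$, an $\ell$-weight $\mathcal{W}$ with $f_s=f_{\mathcal{W}}$, and a verification that the class in question is $\Phi$-stable. The paper does precisely this: the coherent pair (geodesics for $P$; bijections $\varphi_{p,q}$ built from the halfspace labelling $\lambda$) and the weight $\mathcal{W}=\epsilon_s\circ\lambda$ are set up in the proof of Theorem~\ref{thm:medianqm}, and the only new ingredient for the cup-product statement is the short check that non-transversality to $\Gamma s$ implies $\Phi$-stability. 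Namely, for $a\in p^{(\ell)}\cap\supp(\mathcal{W})$ one has $\lambda(a)=\lambda(\varphi_{p,q}(a))\in\Gamma s\cup\Gamma\overline{s}$, so $\alpha(a)$ and $\alpha(\varphi_{p,q}(a))$ are both heads of the same $\Half$-segment and $\zeta(\alpha(a),x_1,\dots,x_n)=\zeta(\alpha(\varphi_{p,q}(a)),x_1,\dots,x_n)$; likewise for tails. That is the whole proof.

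What you sketch instead is a direct primitive construction, and there the details go wrong. An $\Half$-segment of length $\ell$ involves $\ell$ hyperplanes, not a single ``wall'', so the indexing set you describe is ill-posed. More seriously, finite staircase length does \emph{not} bound the length of chains of nested translates of $s$: already for $\mathbb{Z}$ acting on the line (a tree, hence staircase length $1$) such chains are arbitrarily long, and the number of translates separating two vertices grows with their distance. What finite dimension together with finite staircase length actually controls is the number of $\Half$-segments in $[x,y]_\Half^{(\ell)}$ containing a \emph{fixed} vertex $m$ in their interior~\cite[Lemma~3.15]{BFFL}; this is exactly the finiteness condition~(\ref{item:finitenesscond}) for $\mathcal{W}$ and is what feeds into Lemma~\ref{lemma:Hilfe} to bound the primitive constructed inside the proof of Theorem~\ref{main:trivialcupproduct}. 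Your proposed bound on $\|b\|_\infty$ therefore does not follow from the hypothesis you cite.
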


The definitions for finite staircase length, the quasimorphism $f_s$ and for non-transversality are recalled in Section~\ref{subsec:Median}.
The proof of this theorem has the same structure as the proof of Amontova and Bucher. However, it does not use aligned cochains, as it is unclear how aligned cochains can be generalised to cochains in the equivariant bounded cohomology of actions on $\cat0$ cube complexes.

\subsection*{A unified proof}
As the vanishing results mentioned above rely on the same blueprint, it should be possible to give a unified proof that covers and extends all of these results. For this we start by constructing \emph{weight quasimorphisms}~$f_\mathcal{W}$ of a group action on a graph $X=(V,E)$, where the value $f_\mathcal{W}(x,y)$ is given by a weighted sum of tuples of consecutive edges of a suitable path from $x$ to $y$. For a precise definition see Section~\ref{subsec:quasimorph}. We also give sufficient conditions that ensure that cup products and Massey products with classes given by weight quasimorphisms vanish. Those conditions and in particular the notion of $\Phi$-stable classes are explained in Section~\ref{sec:Vanishing}.

\begin{main*}[Theorems~\ref{main:trivialcupproduct} and~\ref{main:trivialMassey}]
	Let $\Gamma$ be a group acting on a graph $X=(V,E)$. Let $f_\mathcal{W}$ be a weight quasimorphism and let $\alpha_1\in H_{\Gamma,b}^{n}(X;\R)$ and $\alpha_2\in H_{\Gamma,b}^{m}(X;\R)$ be \stable\ classes. Then the cup products $\alpha_1\cup[\delta^1f_\mathcal{W}]$ and $[\delta^1f_{\mathcal{W}}]\cup \alpha_2$ as well as the Massey triple product $\langle \alpha_1, [\delta^1f_\mathcal{W}], \alpha_2 \rangle$ are trivial.
\end{main*}

Using the orbit map as a connection between equivariant bounded cohomology and bounded cohomology, this vanishing result can also be used to prove vanishing of products in bounded cohomology. In particular, our Main Theorem can be applied to 
\begin{itemize}
	\item (big) Brooks quasimorphisms, as we reprove the fact that their cup products vanish and also obtain the vanishing of their Massey products (Theorem~\ref{thm:Brooks}),
	\item $\Delta$-decomposable quasimorphisms in order to reprove the vanishing of their cup products and Massey products (Theorem~\ref{thm:Delta}),
	\item median quasimorphisms. \\ Here, we reprove the vanishing result for cup products with median quasimorphisms that was proven by Br\"uck, Fournier-Facio and L\"oh~\cite[Theorem~2.23]{BFFL} and extend the vanishing to Massey products.
\end{itemize}

\subsection*{Organization of the paper}
We start by collecting basics about (equivariant) bounded cohomology, products and quasimorphisms in Section~\ref{sec:Basics}. We then consider group actions on graphs with special properties and construct the weight quasimorphisms of this group action in Section~\ref{sec:construction}. Vanishing of products with the induced classes in degree $2$ are then studied in Section~\ref{sec:Vanishing}. In Section~\ref{sec:Application}, we apply the vanishing result to the setting of non-abelian free groups and actions on $\cat0$ cube complexes.

\subsection*{Acknowledgements}
I am very grateful to my advisor Clara L\"oh for suggesting this topic as a part of my master thesis and for many helpful discussions. I want to thank Talia Fern\'os for discussions on halfspaces in $\cat0$ cube complexes. I also wish to thank Jonathan Bowden and Francesco Fournier-Facio for several useful comments.

\section{Bounded Cohomology and quasimorphisms}
\label{sec:Basics}

In this section we collect various definitions, notation and statements about bounded cohomology and quasimorphisms. 
We start by giving a short introduction to equivariant bounded cohomology and products therein. Then we recall the definition of bounded cohomology of groups and highlight a connection to equivariant bounded cohomology.
For more details and proofs, we refer to~\cite{frigerio, Monod, Li}.

\subsection{Equivariant bounded cohomology}
\label{subsec:equivbddcohom}

As the name suggests, equivariant bounded cohomology is a bounded version of equivariant cohomology. 
We fix a (discrete) group $\Gamma$ and a set $S$ together with a group action $\Gamma\curvearrowright S$. Note that this induces an action of $\Gamma$ on $\ell^\infty (S^{n};\R)$ for $n\in\N$ via
\begin{align*}
	\gamma\cdot f\colon (s_1,\ldots, s_n) \mapsto f(\gamma^{-1}s_1,\ldots, \gamma^{-1}s_n)
\end{align*}
for $\gamma\in\Gamma$ and $ f\in \ell^\infty (S^{n},\R)$. 
We denote the $\Gamma$-invariant elements of this group action by $\ell^\infty (S^{n},\R)^\Gamma$.

\begin{definition}[equivariant bounded cohomology]
	We consider the chain complex $\big(C_{\Gamma,b}^*(S;\R), \delta^*\big)=\big(\ell^\infty (S^{*+1},\R)^\Gamma, \delta^*\big)$ where $\delta^*$ denotes the simplicial coboundary operator. Then the \emph{$\Gamma$-equivariant bounded cohomology of $S$ with coefficients in $\R$} is given by
	\begin{align*}
		H_{\Gamma,b}^*(S;\R) = H^*\big(C^*_{\Gamma,b}(S;\R)\big).
	\end{align*}
	For an action $\Gamma\curvearrowright X$ on a $\cat0$ cube complex or graph with vertex set~$V$ we also write
	\begin{align*}
		C_{\Gamma,b}^*(X;\R)\coloneqq C_{\Gamma,b}^*(V;\R),\text{ and}
		H_{\Gamma,b}^*(X;\R)\coloneqq H_{\Gamma,b}^*(V;\R).
	\end{align*}
\end{definition}

In this paper we focus on the following two products in equivariant bounded cohomology. They are defined in the usual fashion.

\begin{definition}[cup product]
	The \emph{cup product} on cochains of dimension~$p,q\in \N$ is given by 
	\begin{align*}
		\cup \colon C_{\Gamma,b}^p(S;\R) \otimes_\R C_{\Gamma,b}^q(S;\R) &\to C_{\Gamma,b}^{p+q}(S;\R)\\
		f\otimes g &\mapsto \big((s_0,\ldots, s_{p+q}) \mapsto f(s_0,\ldots, s_p)\cdot g(s_p,\ldots, s_{p+q})\big) .
	\end{align*}
	This map induces a well-defined map on the level of cohomology given by
	\begin{align*}
		\cup \colon H_{\Gamma,b}^p(S;\R) \otimes_\R H_{\Gamma,b}^q(S;\R) &\to H_{\Gamma,b}^{p+q}(S;\R)\\
		f\otimes g &\mapsto [f\cup g].
	\end{align*} 
\end{definition}

\begin{remark}
	We will often use the following equation: For $f\in H_{\Gamma,b}^p(S;\R)$ and $g\in H_{\Gamma,b}^q(S;\R)$ we have
	\begin{align*}
		\delta^{p+q}(f\cup g) = (\delta^p f)\cup g + (-1)^p f\cup (\delta^qg).
	\end{align*}
\end{remark}

\begin{definition}[Massey triple product]
	Let $p,q,k\in \N$, $\alpha_1\in H_{\Gamma,b}^p(S;\R)$, $\alpha_2\in H_{\Gamma,b}^q(S;\R)$, and $\alpha\in H_{\Gamma,b}^k(S;\R)$.
	If the cup products $\alpha_1\cup \alpha$ and $\alpha\cup \alpha_2$ are trivial, then the \emph{Massey triple product} $\langle \alpha_1,\alpha, \alpha_2\rangle$ is defined as the subset of $H_{\Gamma,b}^{p+q+k-1}(S;\R)$ consisting of the elements  $(-1)^k \left[(-1)^p\omega_1 \cup \beta_2 - \beta_1\cup \omega_2 \right]$ where
	\begin{itemize}
		\item the cocycles $\omega_1$ and $\omega_2$ are representatives for $\alpha_1$ and $\alpha_2$, respectively; and
		\item the cocycles $\beta_1$ and $\beta_2$ are primitives for $\omega_1\cup \omega$ and $\omega\cup\omega_2$, respectively, where $\omega$ is a cocycle representing $\alpha$.
	\end{itemize}
	We say the Massey triple product $\langle \alpha_1,\alpha, \alpha_2\rangle$ is \emph{trivial}, if it contains the trivial class.
\end{definition}

\begin{remark}
	Massey triple products have already been used in various settings. For example, there are interesting connections between Massey triple products in singular cohomology and the LS-category~\cite{Rud}. Furthermore, Massey triple products can be used to show that the Borromean rings are linked, as their complement has a non-trivial Massey product~\cite[pp. 85-88]{Viktor}.
\end{remark}

\subsection{Quasimorphisms of group actions}

\begin{definition}[quasimorphism of group action]
	\label{def:qmofaction}
	Let $\Gamma\curvearrowright S$ be a group action on a set $S$. A quasimorphism of $\Gamma \curvearrowright S$ is a map $f\colon S^2\to \R$ that is $\Gamma$-invariant and has finite \emph{defect}  
	\begin{align*}
		D(f)\coloneqq \|\delta^1 f\|_\infty = \sup_{x,y,z\in S} | f(y,z) - f(x,z)+ f(x,y)| <\infty.
	\end{align*}
\end{definition}

\begin{remark}
	Note that $f$ itself might not be a bounded map. But since $\delta^2\delta^1f = 0$ and $\delta^1f$ is bounded, we see that $\delta^1f$ defines a cocycle in equivariant bounded cohomology of $S$.
\end{remark}

\subsection{Bounded cohomology}

As we want to apply the vanishing criterion to bounded cohomology of groups, we recall its definition and give a short overview of its connection to equivariant bounded cohomology.

We fix a (discrete) group $\Gamma$. Using the construction in Section~\ref{subsec:equivbddcohom}, we see that the left translation action $\Gamma\curvearrowright\Gamma$ induces an action $\Gamma\curvearrowright\ell^\infty(\Gamma^n,\R)$ for $n\in\N$.

\begin{definition}[(bounded) cohomology of $\Gamma$]
	We consider the chain complex $\big(C_{b}^*(\Gamma;\R), \delta^*\big)=\big(\ell^\infty (\Gamma^{*+1},\R)^\Gamma, \delta^*\big)$ where $\delta^*$ denotes the simplicial coboundary operator. Then the \emph{bounded cohomology of $\Gamma$ with coefficients in $\R$} is given by
	\begin{align*}
		H_{b}^*(\Gamma;\R) = H^*\big(C^*_{b}(\Gamma;\R)\big).
	\end{align*}
\end{definition}

\begin{remark}
	Taking a closer look at the definition of equivariant bounded cohomology and bounded cohomology, we have $C_b^n(\Gamma;\R) = C_{\Gamma,b}^n(\Gamma;\R)$ when we consider the left translation action $\Gamma\curvearrowright\Gamma$. Hence, bounded cohomology is just a specialization of equivariant bounded cohomology. In particular, this allows us to transfer the definitions of cup products and Massey products to bounded cohomology.
\end{remark}

\subsection{Quasimorphisms}
Again, let $\Gamma$ be a group.

\begin{definition}[quasimorphism]
	A map $\varphi\colon \Gamma\to \R$ is called a \emph{quasimorphism of $\Gamma$} if it has finite \emph{defect}
	\begin{align*}
		D(\varphi)\coloneqq \sup_{g,h\in\Gamma} |\varphi(g)+\varphi(h)-\varphi(gh)| <\infty.
	\end{align*}
\end{definition}

\begin{example}
	Trivial examples for quasimorphisms of $\Gamma$ are group homomorphisms and bounded maps.
	Non-trivial examples for quasimorphisms of non-abelian free groups are Brooks quasimorphisms and $\Delta$-decomposable quasimorphisms introduced by Brooks~\cite{Brooks} and Heuer~\cite{Heuer}, respectively. We recall their definitions in Section~\ref{subsec:Brooks} and~\ref{subsec:Delta}, respectively.
\end{example}

\begin{remark}
	Let $\varphi$ be a quasimorphism of $\Gamma$. Then $\varphi$ defines a class in the second bounded cohomology of $\Gamma$ in the following way.
	We define
	\begin{align*}
		\widehat{\varphi}\colon \Gamma^2&\to \R\\
		(g,h)&\mapsto \varphi(g^{-1}h).
	\end{align*}
	Then $\widehat{\varphi}$ and thus also $\delta^1\widehat{\varphi}$ are $\Gamma$-invariant and the latter is bounded by $D(\varphi)$. Hence, we have $[\delta^1\widehat{\varphi}]\in H_b^2(\Gamma;\R)$.
\end{remark}

\subsection{Connection between bounded cohomology and equivariant \texorpdfstring{\newline}{ } bounded cohomology}
\label{subsec:Connection}

Let $\Gamma$ be a group acting on a set $S$. For an element $s\in S$, we consider the orbit map
\begin{align*}
	o_s\colon \Gamma &\to S\\
	g&\mapsto g\cdot s.
\end{align*}
This map induces a $\Gamma$-equivariant cochain map $o_s^*\colon C_{\Gamma,b}^* (S;\R) \to C_b^*(\Gamma;\R),$ given by
\begin{align*}
	o_s^n\colon C_{\Gamma,b}^n (S;\R) &\to C_b^n(\Gamma;\R)\\
	f&\mapsto ((g_0,\ldots, g_n) \mapsto f(g_0s,\ldots, g_ns)).
\end{align*}
in dimension $n\in\N$.
The induced map in bounded cohomology will also be denoted by 
\begin{align*}
	o_s^*\colon H^*_{\Gamma,b} (S;\R) \to H_b^*(\Gamma;\R).
\end{align*}

\begin{remark}[products]
	\label{rem:productsconnection}
	One can easily verify that the orbit map $o_s^*$ is compatible with products in bounded cohomology in the following way:
	For $s\in S$, $p,q,k\in \N$ and classes $\alpha_1\in H_{\Gamma,b}^p(S;\R)$, $\alpha_2\in H_{\Gamma,b}^q(S;\R)$, and $\alpha\in H_{\Gamma,b}^k(S;\R)$ we have
	\begin{align*}
		o_s^{p+q}(\alpha_1\cup \alpha_2)& = \big(o_s^p(\alpha_1)\big)\cup \big(o_s^q(\alpha_2)\big),\quad \text{and} \\
		o_s^{p+q+k-1}\big(\langle \alpha_1,\alpha, \alpha_2\rangle\big) &\subset \big\langle o_s^p(\alpha_1),o_s^k(\alpha), o_s^q(\alpha_2)\big\rangle.
	\end{align*}
\end{remark}

\begin{remark}[quasimorphisms]
	\label{rem:conn_quasimorphisms}	
	Let $f$ be a quasimorphism of $\Gamma\curvearrowright S$. For $s\in S$, the map
	\begin{align*}
		f_s\colon \Gamma&\to \R\\
		g&\mapsto f(s,gs)
	\end{align*}
	is a quasimorphism of $\Gamma$ and 
	\begin{align*}
		o_s^2([\delta^1 f]) = [\delta^1 \widehat{f_s}].
	\end{align*}
\end{remark}

When the action has amenable stabilizers, then the orbit map is an isomorphism:

\begin{theorem}{\cite[Theorem~4.23]{frigerio}}
	\label{thm:orbitmap_is_iso}
	Suppose the stabilizer of every element of $S$ is amenable. Then, 
	\begin{align*}
		o_s^*\colon H^*_{\Gamma,b} (S;\R) \to H_b^*(\Gamma;\R)
	\end{align*}
	is an isomorphism.
\end{theorem}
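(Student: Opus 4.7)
\medskip

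\noindent\textbf{Proof proposal.} The plan is to recognise both $\bigl(\ell^\infty(\Gamma^{*+1};\R),\delta^*\bigr)$ and $\bigl(\ell^\infty(S^{*+1};\R),\delta^*\bigr)$ as \emph{strong, relatively injective resolutions} of the trivial Banach $\Gamma$-module~$\R$, and then invoke the fundamental lemma of homological algebra in the bounded setting: any two such resolutions compute the same cohomology (namely $H_b^*(\Gamma;\R)$), and any $\Gamma$-equivariant chain map extending $\mathrm{id}_\R$ descends to the canonical comparison isomorphism. Since the orbit cochain map $o_s^*$ is induced by a $\Gamma$-equivariant map of augmented resolutions extending $\mathrm{id}_\R$ (constant functions go to constant functions), once both resolutions satisfy the required properties, $o_s^*$ will automatically induce an isomorphism on cohomology.

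\medskip

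\noindent First I would verify that $\bigl(\ell^\infty(S^{*+1};\R),\delta^*\bigr)$, augmented by the inclusion $\R \hookrightarrow \ell^\infty(S;\R)$ of constant functions, is a \emph{strong} resolution: I would fix an arbitrary basepoint $s_0 \in S$ and define contracting homotopies $h_n \colon \ell^\infty(S^{n+1};\R) \to \ell^\infty(S^n;\R)$ by $h_n(f)(s_1,\ldots,s_n) \coloneqq f(s_0,s_1,\ldots,s_n)$. A routine telescoping argument shows $\delta^{n-1}h_n + h_{n+1}\delta^n = \mathrm{id}$, and each $h_n$ has norm at most~$1$; this is exactly the strong resolution condition (the homotopy need not be $\Gamma$-equivariant). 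The same construction applied to $\Gamma$ in place of $S$ handles the standard resolution.

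\medskip

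\noindent The main obstacle is showing that each Banach $\Gamma$-module $\ell^\infty(S^{n+1};\R)$ is \emph{relatively injective}, and this is precisely where the amenability hypothesis enters. I would decompose $S^{n+1}$ into $\Gamma$-orbits, pick a representative $t$ in each orbit with stabiliser $\Gamma_t$, and use a $\Gamma_t$-invariant mean $m_t \in \ell^\infty(\Gamma_t;\R)^*$ to build the required extension operator. Concretely, given a $\Gamma$-equivariant admissible injection $\iota \colon A \hookrightarrow B$ of Banach $\Gamma$-modules with norm-$1$ left inverse $\sigma$, and a $\Gamma$-equivariant bounded $\alpha \colon A \to \ell^\infty(S^{n+1};\R)$, one defines an extension $\beta \colon B \to \ell^\infty(S^{n+1};\R)$ by setting $\beta(b)(\gamma\cdot t) \coloneqq m_t\bigl(\gamma_0 \mapsto (\alpha \circ \sigma)(\gamma_0^{-1}\gamma\cdot b)(t)\bigr)$ after choosing a coset representative; invariance of the mean under $\Gamma_t$ makes this independent of the representative, and the resulting map is $\Gamma$-equivariant, bounded by $\|\alpha\|$, and extends $\alpha$. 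This is the analogue in the $\Gamma$-action setting of the classical proof that $\ell^\infty(\Gamma^{n+1};\R)$ is relatively injective, where the only nontrivial averaging is absorbed by using means on the stabilisers.

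\medskip

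\noindent With both properties established, the fundamental lemma (see e.g.\ \cite[Lemma~4.20]{frigerio}) gives a $\Gamma$-equivariant chain map in either direction between the two resolutions, each extending $\mathrm{id}_\R$, and any two such maps are $\Gamma$-equivariantly chain homotopic. The orbit map $o_s^*$ is precisely a chain map of this form, so after passing to $\Gamma$-invariants and cohomology it yields the unique comparison isomorphism $H^*_{\Gamma,b}(S;\R) \xrightarrow{\cong} H_b^*(\Gamma;\R)$. The delicate point remains the explicit averaging via stabiliser means; everything else is formal once the resolution machinery is in place.
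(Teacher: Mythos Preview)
The paper does not prove this statement; it simply quotes it from \cite[Theorem~4.23]{frigerio} and uses it as a black box. Your proposal is essentially the standard argument found in that reference: recognise both $\ell^\infty(\Gamma^{*+1};\R)$ and $\ell^\infty(S^{*+1};\R)$ as strong relatively injective resolutions of the trivial Banach $\Gamma$-module $\R$, and invoke the fundamental lemma so that the orbit map---being a $\Gamma$-equivariant chain map extending $\mathrm{id}_\R$---induces the canonical isomorphism.

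Your sketch is correct in outline. One point worth making explicit: to establish relative injectivity of $\ell^\infty(S^{n+1};\R)$ you need amenability of the stabiliser of each \emph{tuple} in $S^{n+1}$, not merely of each point of $S$. This is automatic, since the stabiliser of $(s_0,\ldots,s_n)$ is the intersection $\bigcap_i \Stab(s_i)$, a subgroup of the amenable group $\Stab(s_0)$, hence itself amenable; but it should be said. With that in hand, your averaging construction over orbit representatives using the stabiliser means is exactly the standard device, and the rest is indeed formal.
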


\section{Construction of weight quasimorphisms}
\label{sec:construction}

This section is structured as follows:
Firstly, we recall the definition of graphs and paths to fix notation. We then introduce the quasi-median property for graphs and, associated to it, a property of the group action on the graph. Finally we construct quasimorphisms of this group action.

\subsection*{Graphs and paths}

In this paper, we always work with undirected simplicial graphs. Let $X=(V,E)$ be such an undirected graph and let $n \in \N$. A path in $X$ of length $n$ is a sequence $v_0,\ldots,v_n$ of \emph{distinct} vertices of $X$ with the property that $\lbrace v_j,v_{j+1}\rbrace\in  E$ holds for all $j \in\lbrace 0,\ldots,n-1\rbrace$. Let $p= v_0,\ldots, v_n$ and $q=w_0,\ldots, w_m$ be two paths in $X$ such that $p$ and~$q$ share no vertex except from $v_n=w_0$. Then the \emph{concatenation $p\ast q$ of $p$ and~$q$} and the \emph{inverse path $\overline{p}$ of $p$} are given by
\begin{align*}
	p\ast q &= v_0,\ldots, v_n, w_1,\ldots w_m,\\
	\overline{p} &= v_n,\ldots, v_0.
\end{align*} 
For two vertices $v,w\in V$ we denote by $[v,w]$ the set of geodesics from $v$ to~$w$, i.e., the set of shortest paths from $v$ to $w$.
Furthermore, we denote by~$\Eor$ the set of oriented edges of $X$,
\begin{align*}
	\Eor \coloneqq \left\lbrace (\alpha,\omega)\in V^2\mid \lbrace\alpha,\omega\rbrace \in E\right\rbrace.
\end{align*}
For an oriented edge $e=(\alpha,\omega)\in \Eor$, we denote by $\overline{e}$ the corresponding edge with the opposite orientation, i.e., $\overline{e}\coloneqq (\omega,\alpha)$. Moreover, for $\ell\in\N_{>0}$ and a tuple $a=(e_1,\ldots,e_\ell)\in \Eorl$ of oriented edges, we define $\overline{a}\coloneqq (\overline{e_\ell},\ldots,\overline{e_1})$.

\begin{definition}
	Consider a finite path $p=x_0,\ldots,x_n$ in $X$ and its oriented edges $e_i=(x_i,x_{i+1})$ for $i\in \lbrace 1,\ldots,n-1\rbrace$.
	For $\ell\in\N$, we call a tuple  $(e_{i_1},\ldots, e_{i_\ell})$ with $0\leq i_1<i_2<\ldots<i_\ell\leq n-1$ an \emph{\step}\ of $p$. The set of \step s of $p$ is denoted by $p^{(\ell)}$.
	
	For an \step\ $a=(e_{i_1},\ldots, e_{i_\ell})\in p^{(\ell)}$ we define its \emph{head} $\alpha(a)$ and its \emph{tail} $\omega(a)$ via
	\begin{align*}
		\alpha(a)\coloneqq x_{i_1} \quad\text{ and }\quad \omega(a) \coloneqq x_{i_\ell+1}.
	\end{align*} 
	
	We say a vertex $m$ that appears on the path $p$ \emph{is contained in $a\in p^{(l)}$} if $\alpha(a)\neq m\neq \omega(a)$ and if the vertices $\alpha(a)$, $m$, $\omega(a)$ appear in this order on the path $p$.
\end{definition}

In order to obtain vanishing results, we introduce the quasi-median property for families of paths.

\begin{definition}[quasi-median property]
	\label{def:quasimedprop}
	Let $X=(V,E)$ be a connected graph.
	For $x,y\in V$, let $P(x,y)\neq\emptyset$ be a set of finite paths from $x$ to $y$.
	The family $(P(x,y))_{x,y\in V}$ is said to fulfil the \emph{\qm\ for $R\in\N$}, if the following holds:
	
	For all $x,y,z\in V$ there exist
	\begin{itemize}
		\item a triple $(m_x,m_y,m_z)\in V^3$, and
		\item paths $p_{xy}\in P(x,y)$, $p_{yz}\in P(y,z)$, and $p_{xz}\in P(x,z)$
	\end{itemize}
	that allow a decomposition
	\begin{align*}
		p_{xy} &= s_x*r_1*\overline{s_y},\\
		p_{yz} &= s_y*r_2*\overline{s_z},\\
		p_{xz} &= s_x*r_3*\overline{s_z}
	\end{align*}
	where $s_x\in P(x,m_x)$, $s_y\in P(y,m_y)$, $s_z\in P(z,m_z)$ and  $r_1\in P(m_x,m_y)$, $r_2\in P(m_y,m_z)$, $r_3\in P(m_x,m_z)$ such that $r_1$, $r_2$, and $r_3$ have length at most $R$, see Figure~\ref{pic:qmprop}.
	\\ \\
	We say that $(P(x,y))_{x,y\in V}$ fulfils the \emph{\qm}, if there exists some $R\in \N$ such that the family fulfils the \qm\ for~$R$.
\end{definition}

\begin{figure}[h] 
	\begin{center}
		\begin{tikzpicture}[>=stealth, scale = 0.9]
			\newcommand{\arrowIn}{\tikz \draw[-stealth] (-1pt,0) -- (1pt,0);}
			
			\coordinate (a) at (-1,0);
			\coordinate (b) at (1,0);
			\coordinate (c) at (0,1.6);
			
			\coordinate (A) at (-2.5,-0.75);
			\coordinate (B) at (2.5,-0.75);
			\coordinate (C) at (0,3);
			
			\filldraw (a) circle (1pt);
			\node[below] at (a) {\scriptsize$m_x$};
			\filldraw (b) circle (1pt);
			\node[below] at (b) {\scriptsize$m_z$};
			\filldraw (c) circle (1pt);
			\node[left] at (c) {\scriptsize$m_y$};
			
			\filldraw (A) circle (1pt);
			\node[below left] at (A) {$x$};
			\filldraw (B) circle (1pt);
			\node[below right] at (B) {$z$};
			\filldraw (C) circle (1pt);
			\node[above=2.5pt] at (C) {$y$};
			
			\draw[-] (b) to node[above] {\scriptsize$r_3$} node{\arrowIn} (a);
			\draw[-] (a) to node[sloped,below,allow upside down] {\scriptsize$r_1$} node[sloped, pos=0.5, allow upside down]{\arrowIn} (c);
			\draw[-] (c) to node[sloped,below] {\scriptsize$r_2$} node[sloped, pos=0.5, allow upside down]{\arrowIn} (b);
			
			\draw[-]  (A) to node[sloped,above] {\scriptsize$s_x$} node[sloped, pos=0.5, allow upside down]{\arrowIn} (a);
			\draw[-]  (B) to node[sloped,above] {\scriptsize$s_z$} node[sloped, pos=0.5, allow upside down]{\arrowIn} (b);
			\draw[-]  (C) to node[right=1pt] {\scriptsize$s_y$} node[sloped, pos=0.5, allow upside down]{\arrowIn} (c);
			
			\draw [-,dashed] (-2.4,-1) .. controls (-1,-0.5) and (1,-0.5) .. node[sloped,below] {\scriptsize$p_{xz}$} node[sloped, pos=0.5, allow upside down]{\arrowIn} (2.4,-1); 
			\draw [-,dashed] (-2.6,-0.45) .. controls (-1.5,0) and (-0.5,1.6) .. node[sloped,above] {\scriptsize$p_{xy}$} node[sloped, pos=0.5, allow upside down]{\arrowIn} (-0.3,3); 
			\draw [-,dashed] (0.3,3) .. controls (0.5,1.6) and (1.5,0) ..   node[sloped,above] {\scriptsize$p_{yz}$} node[sloped, pos=0.5, allow upside down]{\arrowIn} (2.6,-0.45); 
			
		\end{tikzpicture}
	\end{center}
	\caption{Quasi-median property} \label{pic:qmprop}
\end{figure}
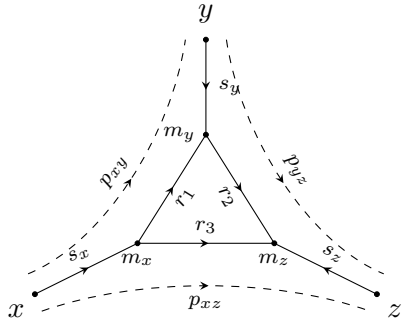

\begin{remark}
	This definition generalises the notion of median graphs. A graph $X=(V,E)$ with graph metric $D$ is called median graph if for all $x,y,z\in V$, there exits a unique vertex $m\in V$ such that 
	\begin{align*}
		D(x,y) &= D(x,m) + D(m,y)\\
		D(x,z) &= D(x,m) + D(m,z)\\
		D(y,z) &= D(y,m) + D(m,z).
	\end{align*}
	Equivalently, one can say that the family $\big([x,y]\big)_{x,y\in V}$ of geodesics fulfils the quasi-median property for $R=0$. 
	There are also other approaches to generalise the notion of median graphs, for example almost median graphs or coarse median spaces that admit a median point for every triple of vertices up to a uniformly bounded error~\cite{Bowditch, FG}. Our approach differs from those as it is not a property of the graph/space itself but a property of a family of arbitrary paths in the given graph. We will also use families that fulfil the quasi-median property but do not necessarily consist of geodesics when we apply the general setting to $\Delta$-decomposable quasimorphisms~\ref{subsec:Delta}.
\end{remark}

Now let $\Gamma$ be a discrete group acting on the graph $X$ via graph automorphisms. 

\begin{definition}[coherent pair]
	\label{def:coherentpair}
	For $x,y\in V$, let $P(x,y)\neq\emptyset$ be a set of finite paths from $x$ to $y$ such that the family $P=(P(x,y))_{x,y\in V}$ fulfils the quasi-median property. We say that $P$ is \emph{coherent} if
	\begin{enumerate}
		\item $P$ is compatible with the action of $\Gamma$, i.e., for all $g\in \Gamma$ we have $g P(x,y) = P(g x, g y)$;
		\item $P$ is compatible with inversion, i.e., we have $P(x,y) = \overline{P(y,x)}$;
		\item $P$ is closed under taking sub-paths, i.e., for $p\in P(x,y)$, every sub-path $p'$ of $p$ is contained in $P(x',y')$ for $x'$ the starting point of $p'$ and $y'$ its endpoint; and
		\item \label{item:coherentlength} for all $x,y\in V$ any two paths $p,q\in P(x,y)$ have the same length. 
	\end{enumerate}
	Let $\ell \in \N_{>0}$. For vertices $x,y\in V$ and paths $p,q\in P(x,y)$ we fix a bijection $\varphi_{p,q}\colon p^{(\ell)}\to q^{(\ell)}$ and define
	\begin{align*}
		\Phi\coloneqq (\varphi_{p,q})_{(p,q)\in \bigcup\limits_{{(x,y)\in V^2}} P(x,y)^2}
	\end{align*}
	Then we call the pair $(P,\Phi)$ a \emph{coherent pair of size $\ell$}.
\end{definition}

\begin{remark}
	For $x,y\in V$ and $p,q\in P(x,y)$ there is an obvious bijection between $p^{(l)}$ and $q^{(l)}$ induced by a map that sends the $i$-th edge of $p$ to the $i$-th edge of~$q$.  
	In the applications, we sometimes use these obvious bijections. But in order to (re)prove statements involving median quasimorphisms it is necessary to consider more complex bijections, see Remark~\ref{rem:why}.
\end{remark}

\subsection*{Weight quasimorphisms} We will now construct quasimorphisms for a group action on a graph that admits a coherent pair.
\label{subsec:quasimorph}

\begin{setup}
	\label{setup}
	We consider a group $\Gamma$ acting on a graph $X=(V,E)$ via graph automorphisms. Furthermore, we assume there exists $\ell\in \N_{>0}$ and a coherent pair 
	\begin{align*}
		(P,\Phi)= \big((P(x,y))_{x,y\in V}, (\varphi_{p,q})_{(p,q)\in \bigcup\limits_{{(x,y)\in V^2}} P(x,y)^2}\big)
	\end{align*}
	of size $\ell$. 
\end{setup}

\begin{definition}[\weight]
	Assuming we have a coherent pair $(P,\Phi)$ of size $\ell$ as in Setup~\ref{setup} a map $\mathcal{W}\colon \Eorl\to \R$ is called an \emph{\weight}\ if 
	\begin{enumerate}
		\item it is $\Gamma$-invariant with respect to the diagonal action;
		\item it is alternating, i.e., the equation $-\mathcal{W}(a) =\mathcal{W}(\overline{a})$ holds for all $a\in \Eorl$;
		\item it is bounded with respect to the supremum norm;
		\item it is path-independent with respect to $\Phi$, i.e. for all $x,y\in V$ and $p,q\in P(x,y)$ we have
		\begin{align*}
			\mathcal{W}_{\mid p^{(\ell)}} = \mathcal{W}_{\mid q^{(\ell)}}\circ \varphi_{p,q}\text{ ; and}
		\end{align*}
		\item \label{item:finitenesscond} it fulfils the following finiteness condition: There exists~$c\in \N_{>1}$ such that for all $x,y\in V$, for all $p\in P(x,y)$, and for all vertices $m$ on $p$ we have
		\begin{align*}
			\Big| \big\lbrace a\in p^{(\ell)} \mid m \text{ is contained in } a \big\rbrace \cap \supp(\mathcal{W})\Big|\leq c
		\end{align*}
		where $\supp(\mathcal{W}) \coloneqq \big\lbrace a\in\Eorl \mid \mathcal{W}(a) \neq 0\big\rbrace$.
	\end{enumerate}
\end{definition}

\begin{defthm}[weight quasimorphism]
	\label{main:quasimorphi} 
	In the situation of Setup~\ref{setup} let $\mathcal{W}\colon  \Eorl\to \R$ be an \weight. 
	Then the map 
	\begin{align*}
		V\times V &\to \R\\
		(x,y) &\mapsto \sum_{a\in p^{(l)}}\mathcal{W}(a),
	\end{align*}
	with $p\in P(x,y)$, is a well-defined, antisymmetric quasimorphism of $\Gamma\curvearrowright X$. We call this map the \emph{weight quasimorphism for $\mathcal{W}$}.
\end{defthm}

\begin{remark}
	Note that these quasimorphisms are not always trivial, since weight quasimorphisms are a generalization of (big) Brooks, $\Delta$-decomposable and median quasimorphisms (Section~\ref{sec:Application}). 
	It is also necessary that $\supp(\mathcal{W})$ can contain fragments whose edges are not directly consecutive in order to show that median quasimorphisms are weight quasimorphisms, see Remark~\ref{rem:why}.
\end{remark}

For the proof of Theorem~\ref{main:quasimorphi}, we need the definition of $\Phi$-stable maps and a lemma.

\begin{definition}[$\Phi$-stable map]
	\label{def:Phistablemap}
	Assume we are in the situation of Setup~\ref{setup}.  A map $\tau\colon \Eorl\to \R$ is then called \emph{$\Phi$-stable} if for all $x,y\in V$, $p,q\in P(x,y)$ and $a\in p^{(l)}\cap \supp(\mathcal{W})$ we have
	\begin{align*}
		\tau(a) = \tau(\varphi_{p,q}(a)).
	\end{align*}
\end{definition}

The notion of $\Phi$-stability is similar to path-independence. However, $\Phi$-stability is a weaker condition, as it requires path-independence only on $\supp(\mathcal{W})$.

\begin{lemma}
	\label{lemma:Hilfe}
	In the situation of Setup~\ref{setup}, let $R\in\N$ such that the coherent family $P$ fulfils the \qm\ for $R$. Furthermore, let $\mathcal{W}\colon  \Eorl\to \R$ be an \weight\ and $\tau\colon \Eorl\to \R$ be a map that is
	\begin{itemize}
		\item symmetric, i.e., the equation $\tau(a) = \tau(\overline{a})$ holds for all $a\in \Eorl$; and
		\item $\Phi$-stable. 
	\end{itemize}
	Then for all triples of vertices $x,y,z\in V$ and all paths $p_{xy}\in P(x,y)$, $p_{yz}\in P(y,z)$, and $p_{xz}\in P(x,z)$ the absolute value 
	\begin{align*}
		\bigg|\sum_{{a\in p_{xy}^{(\ell)}}} \mathcal{W}(a)\tau(a) + \sum_{{a\in p_{yz}^{(\ell)}}} \mathcal{W}(a)\tau(a) - \sum_{{a\in p_{xz}^{(\ell)}}} \mathcal{W}(a)\tau(a) \bigg| 
	\end{align*}
	is bounded by $3 (R+1) c \|\mathcal{W}\|_\infty \|\tau\|_\infty$, where the constant $c$ witnesses the finiteness property of the \weight\ $\mathcal{W}$ (Property~\ref{item:finitenesscond}).
\end{lemma}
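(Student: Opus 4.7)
The plan is to first use the $\Phi$-stability of $\tau$ together with the path-independence of $\mathcal{W}$ to reduce to paths of a convenient form. For any $p,q\in P(x,y)$, every $a\in p^{(\ell)}\cap \supp(\mathcal{W})$ satisfies $\mathcal{W}(a)=\mathcal{W}(\varphi_{p,q}(a))$ and $\tau(a)=\tau(\varphi_{p,q}(a))$, while $\varphi_{p,q}$ restricts to a bijection $p^{(\ell)}\cap \supp(\mathcal{W})\to q^{(\ell)}\cap \supp(\mathcal{W})$ (since $\mathcal{W}$ is invariant along $\varphi_{p,q}$). Hence the sum $\Sigma(p):=\sum_{a\in p^{(\ell)}}\mathcal{W}(a)\tau(a)$ depends only on the endpoints of $p$, so I may replace the given paths with the ones supplied by the \qm: paths admitting decompositions $p_{xy}=s_x*r_1*\overline{s_y}$, $p_{yz}=s_y*r_2*\overline{s_z}$, and $p_{xz}=s_x*r_3*\overline{s_z}$, where $r_1,r_2,r_3$ have length at most $R$.

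Next I would split each $\Sigma(p_{xy})$ into the contributions from fragments entirely contained in $s_x$, those entirely in $\overline{s_y}$, and the remaining ``error fragments''. The alternating property of $\mathcal{W}$ combined with the symmetry of $\tau$ gives $\mathcal{W}(\overline{a})\tau(\overline{a})=-\mathcal{W}(a)\tau(a)$, so the $\overline{s_y}$-contribution equals $-\Sigma(s_y)$. This yields $\Sigma(p_{xy})=\Sigma(s_x)-\Sigma(s_y)+E_{xy}$, with analogous identities for $p_{yz}$ and $p_{xz}$. Substituting, the ``pure'' terms $\Sigma(s_x)$, $\Sigma(s_y)$, $\Sigma(s_z)$ cancel in pairs, leaving only $E_{xy}+E_{yz}-E_{xz}$ to bound.

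The key combinatorial step is estimating each error. For $\ell\geq 2$, I claim that every error fragment of $p_{xy}$ contains at least one vertex of $r_1$ or one of the transition vertices $m_x,m_y$ in the ``is contained in'' sense defined earlier: either the fragment uses some edge of $r_1$, in which case an endpoint of that edge lies strictly between its head and tail, or all its edges lie in $s_x\cup\overline{s_y}$ with at least one edge on each side, forcing both $m_x$ and $m_y$ to be contained. Since $r_1$ has at most $R+1$ vertices (including $m_x$ and $m_y$), and property~(\ref{item:finitenesscond}) of the \weight\ bounds by $c$ the number of fragments in $\supp(\mathcal{W})$ that contain any given vertex, at most $(R+1)c$ error fragments of $p_{xy}$ lie in $\supp(\mathcal{W})$. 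Each such fragment contributes at most $\|\mathcal{W}\|_\infty\|\tau\|_\infty$, whence $|E_{xy}|\leq (R+1)c\,\|\mathcal{W}\|_\infty\|\tau\|_\infty$, and the triangle inequality delivers the desired bound $3(R+1)c\,\|\mathcal{W}\|_\infty\|\tau\|_\infty$.

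The main obstacle I anticipate is the corner case $\ell=1$, since then $\ell$-fragments are single edges and contain no internal vertex, so the argument above does not apply directly. But in that case the error fragments of $p_{xy}$ are precisely the edges of $r_1$, of which there are at most $R\leq (R+1)c$ (recall $c\geq 2$ by definition), so the same bound holds. The remaining verifications (that $s_x,s_y,s_z$ are themselves in $P$, via closure under sub-paths, and that $\overline{s_y}\in P(m_y,y)$, via closure under inversion) are immediate from coherence of $P$.
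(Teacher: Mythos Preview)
Your proposal is correct and follows essentially the same route as the paper's proof: reduce to the quasi-median paths via path-independence of $\mathcal{W}\tau$, split each sum into the $s_x$-part, the $\overline{s_y}$-part, and an error term, flip the sign on the reversed subpath using that $\mathcal{W}$ is alternating and $\tau$ symmetric, telescope the pure terms, and bound each error by $(R+1)c\,\|\mathcal{W}\|_\infty\|\tau\|_\infty$ via the finiteness condition. Your explicit case analysis showing that every error fragment (for $\ell\geq 2$) contains a vertex of $r_1$, together with the separate treatment of $\ell=1$, is in fact slightly more careful than the paper, which asserts the equality $A_{xy}=\bigcup_{m\in r_1\cap V}\{a\mid m\text{ contained in }a\}$ without isolating the $\ell=1$ case.
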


\begin{proof}
	The $\Phi$-stability of $\tau$ implies that for $x,y\in V$ and $p,q\in P(x,y)$ we have $(\mathcal{W}\cdot \tau)_{\mid p^{(\ell)}}=(\mathcal{W}\cdot \tau)_{\mid q^{(\ell)}}\circ \varphi_{p,q}$. Since $\varphi_{p,q}$ is a bijection, we have
	\begin{align*}
		\sum_{\mathclap{a\in p^{(\ell)}}} \mathcal{W}(a)\tau(a) &= \sum_{\mathclap{a\in q^{(\ell)}}} \mathcal{W}(a)\tau(a).
	\end{align*}
	This means that, for every choice of vertices $x,y,z\in V$, it suffices to prove the claim for paths $p_{xy}\in P(x,y)$, $p_{yz}\in P(y,z)$, and $p_{xz}\in P(x,z)$ that have a decomposition into subpaths
	\begin{align*}
		p_{xy} &= s_x*r_1*\overline{s_y},\\
		p_{yz} &= s_y*r_2*\overline{s_z},\\
		p_{xz} &= s_x*r_3*\overline{s_z},
	\end{align*}
	where the length of $r_1$, $r_2$, and $r_3$ is bounded by $R$, see Figure~\ref{pic:qmprop}.
	The existence of such paths is guaranteed by the fact that $P$ fulfils the quasi-median property for $R$.
	
	At first we observe that $p_{xy}^{(\ell)}$ can be written as the following disjoint (!) union:
	\begin{align*}
		p_{xy}^{(\ell)} = s_x^{(\ell)} \sqcup \overline{s_y}^{(\ell)} \sqcup  A_{xy}
	\end{align*} 
	with
	\begin{align*}
		A_{xy} \coloneqq p_{xy}^{(\ell)}\setminus  \big(s_x^{(\ell)} \cup\overline{ s_y}^{(\ell)}\big) = \bigcup_{m\in r_1\cap V} \big\lbrace a\in p_{xy}^{(\ell)} \mid m\text{ is contained in } a\big\rbrace.
	\end{align*}
	Since there are at most $R+1$ vertices in $r_1\cap V$, the finiteness condition of the \weight\ $\mathcal{W}$ gives rise to the inequality
	\begin{align}
		\label{lem:finitenessproof}
		\big|A_{xy} \cap \supp{\mathcal{W}}\big| \leq (R+1)\cdot c.
	\end{align}
	In the same way we define the sets $A_{yz}$ and $A_{xz}$ and since the preceding observation was independent of the choice of vertices, we obtain the same upper bound for the cardinalities of $A_{yz} \cap \supp{\mathcal{W}}$ and $A_{xz} \cap \supp{\mathcal{W}}$.
	
	Furthermore, as $\tau$ is symmetric and $\mathcal{W}$ is alternating, we obtain
	\begin{align*}
		\sum_{\mathclap{a\in p_{xy}^{(\ell)}}} \mathcal{W}(a)\tau(a)
		&= \sum_{\mathclap{a\in s_x^{(\ell)}}} \mathcal{W}(a)\tau(a) + \sum_{\mathclap{a\in \overline{s_y}^{(\ell)}}} \mathcal{W}(a)\tau(a) + \sum_{\mathclap{a\in A_{xy}}} \mathcal{W}(a)\tau(a) \\
		&= \sum_{\mathclap{a\in s_x^{(\ell)}}} \mathcal{W}(a)\tau(a) - \sum_{\mathclap{a\in s_y^{(\ell)}}} \mathcal{W}(a)\tau(a) + \sum_{\mathclap{a\in A_{xy}}} \mathcal{W}(a)\tau(a).
	\end{align*}
	Using Inequality~\ref{lem:finitenessproof}, the last sum is bounded by~$(R+1) c \|\mathcal{W}\|_\infty\|\tau\|_\infty$, which is independent of $x,y$.
	Applying the same arguments to $\sum_{{a\in p_{yz}^{(\ell)}}} \mathcal{W}(a)\tau(a)$ and $\sum_{{a\in p_{xz}^{(\ell)}}} \mathcal{W}(a)\tau(a)$ we obtain the following equation up to a finite error bounded by $3(R+1) c \|\mathcal{W}\|_\infty\|\tau\|_\infty$: 
	\begin{align*}
		\sum_{\mathclap{a\in p_{xy}^{(l)}}} \mathcal{W}(a)\tau(a) + \sum_{\mathclap{a\in p_{yz}^{(l)}}} \mathcal{W}(a)&\tau(a) - \sum_{\mathclap{a\in p_{xz}^{(l)}}} \mathcal{W}(a)\tau(a) \\
		=&\quad\sum_{\mathclap{a\in s_x^{(l)}}} \mathcal{W}(a)\tau(a) - \sum_{\mathclap{a\in s_y^{(l)}}} \mathcal{W}(a)\tau(a)\\
		&+\sum_{\mathclap{a\in s_y^{(l)}}} \mathcal{W}(a)\tau(a) - \sum_{\mathclap{a\in s_z^{(l)}}} \mathcal{W}(a)\tau(a)\\
		&-\sum_{\mathclap{a\in s_x^{(l)}}} \mathcal{W}(a)\tau(a) + \sum_{\mathclap{a\in s_z^{(l)}}} \mathcal{W}(a)\tau(a)\\
		=& \quad0 \qedhere
	\end{align*}
\end{proof}

\begin{proof}[Proof of Theorem~\ref{main:quasimorphi}]
	Let $\mathcal{W}\colon  \Eorl\to \R$ be an \weight. 
	The map
	\begin{align*}
		f_{\mathcal{W}}\colon V\times V &\to \R\\
		(x,y) &\mapsto \sum_{\mathclap{a\in p^{(\ell)}}}\mathcal{W}(a),
	\end{align*}
	is independent of the choice of path $p\in P(x,y)$ by the path-independence of~$\mathcal{W}$. Furthermore, we have $P(x,y) = \overline{P(y,x)}$ for all $x,y\in V$. Hence,~$f_\mathcal{W}$ is antisymmetric as this is the case for~$\mathcal{W}$.
	In order to prove that~$f_\mathcal{W}$ is a quasimorphism, we need to show~$\Gamma$-invariance and boundedness of the defect. The first property follows from the~$\Gamma$-invariance of~$\mathcal{W}$ and the coherence of~$P$ with the group action. 
	\\
	For boundedness, we consider the constant map $\tau = 1_\R$ on $\Eorl$ with value~$1$. The map $\tau$ is clearly symmetric and $\Phi$-stable. Hence, we can apply Lemma~\ref{lemma:Hilfe} and obtain $3(R+1) c \|\mathcal{W}\|_\infty$ as a finite upper bound for the defect of $f_\mathcal{W}$.
\end{proof}

\section{Vanishing of products}
\label{sec:Vanishing}

In order to obtain vanishing results for products with the classes induced by weight quasimorphisms, we consider the following property for cochains and classes in equivariant bounded cohomology. 

\begin{setup}
	\label{setup2}
	We consider a group $\Gamma$ acting on a graph $X=(V,E)$ via graph automorphisms. We assume that there exists a coherent pair
	\begin{align*}
		(P,\Phi) = \big((P(x,y))_{x,y\in V},  (\varphi_{p,q})_{(p,q)\in \bigcup\limits_{{(x,y)\in V^2}} P(x,y)^2}\big)
	\end{align*} of size $\ell\in\N_{>0}$. 
	We furthermore fix an \weight\ $\mathcal{W}$ and its corresponding weight quasimorphism
	\begin{align*}
		f_\mathcal{W}\colon V\times V &\to \R\\
		(x,y) &\mapsto \sum_{a\in p^{(\ell)}}\mathcal{W}(a)
	\end{align*}
	with $p\in P(x,y)$.
\end{setup}

\begin{definition}[\stable\ cochain/class]
	In the situation of Setup~\ref{setup2}, a cochain $\zeta\in C_{\Gamma,b}^n(X;\R)$ is said to be \emph{\stable}\ if for all $x_1,\ldots,x_n\in V$ the maps
	\begin{align*}
		\zeta(\alpha(\cdot),x_1,\ldots,x_n) \colon \Eorl&\to \R\\
		\zeta(\omega(\cdot),x_1,\ldots,x_n) \colon \Eorl&\to \R
	\end{align*}
	are $\Phi$-stable in the sense of Definition~\ref{def:Phistablemap}.
	
	A class $\alpha\in H_{\Gamma,b}^n(X;\R)$ is said to be \emph{\stable}\ if it admits a \stable\ representative.
\end{definition}

Now we are ready to formulate vanishing results for cup products and Massey triple products with classes given by weight quasimorphisms.

\begin{main}[Triviality of the cup product]
	\label{main:trivialcupproduct}
	In the situation of Setup~\ref{setup2}, let $\alpha\in H_{\Gamma,b}^n(X;\R)$ be \stable. Then the cup products $[\delta^1f_{\mathcal{W}}]\cup \alpha$ and $\alpha\cup[\delta^1f_\mathcal{W}]$ are trivial in $H_{\Gamma,b}^{n+2}(X;\R)$.
\end{main}

\begin{main}[Triviality of the Massey triple product]
	\label{main:trivialMassey}
	In the situation of Setup~\ref{setup2}, let $\alpha_1\in H_{\Gamma,b}^{n}(X;\R)$ and $\alpha_2\in H_{\Gamma,b}^{m}(X;\R)$ be \stable\ classes. Then the Massey triple product $\langle \alpha_1, [\delta^1f_\mathcal{W}], \alpha_2 \rangle\subset H_{\Gamma,b}^{n+m+1}(X;\R)$ is trivial.
\end{main}

\begin{proof}[Proof of Theorem~\ref{main:trivialcupproduct}]
	In order to prove the vanishing result for cup products, let $\alpha\in H_{\Gamma,b}^n(X;\R)$ be a \stable\ class and $\zeta \in  C_{\Gamma,b}^n(X;\R)$ be a \stable\ representative of $\alpha$.
	In order to show that the cup product~$[\delta^1f_\mathcal{W}]\cup \alpha$ is trivial, we need to find a $\Gamma$-invariant map $\eta \in \Map(X,\R)$ such that $\beta\coloneqq f_\mathcal{W}\cup \zeta + \delta^n\eta$ is a bounded $(n+1)$-cochain. We can then obtain the triviality of the cup product via
	\begin{align*}
		[\delta^1f_\mathcal{W}]\cup \alpha = [\delta^1 f_\mathcal{W} \cup \zeta] = [\delta^{n+1}\beta] = 0.
	\end{align*}

	In order to construct a suitable candidate for $\eta$, we use a similar strategy to that of Br\"uck, Fournier-Facio and L\"oh \cite{BFFL} and define 
	\begin{align*}
		\tilde{\zeta} \colon \Eorl\times V^n&\to \R\\
		(a,x_1,\ldots,x_n)&\mapsto \frac{1}{2} \big(\zeta\big(\alpha(a),x_1,\ldots, x_n\big) + \zeta\big(\omega(a),x_1,\ldots, x_n\big) \big).
	\end{align*}
	Since $\zeta$ is a \stable\ cocycle, the map $\tilde{\zeta}(\cdot,x_1,\ldots,x_n)$ for fixed $x_1,\ldots, x_n\in V$ is $\Phi$-stable.
	For this reason, the map 
	\begin{align*}
		\eta\colon V^{n+1}&\to \R\\
		(x_0,\ldots, x_n) &\mapsto \sum_{\mathclap{a\in p_{01}^{(\ell)}}}\mathcal{W}(a) \tilde{\zeta}(a,x_1,\ldots, x_n)
	\end{align*}
	with $p_{01}\in P(x_0,x_1)$ is well-defined as it does not depend on the choice of path $p_{01}\in P(x_0,x_1)$. By the $\Gamma$-invariance of $\mathcal{W}$ and $\zeta$, it follows that the map $\eta$ is $\Gamma$-invariant.
	In order to prove boundedness of $\beta\coloneqq f_\mathcal{W}\cup \zeta + \delta^n\eta$ we fix vertices $x_0,\ldots, x_{n+1}\in V$ and paths $p_{01}\in P(x_0,x_1)$, $p_{02}\in P(x_0,x_2)$ and $p_{12}\in P(x_1,x_2)$. We now wish to apply Lemma~\ref{lemma:Hilfe} to the symmetric and $\Phi$-stable map $\tau\coloneqq \tilde{\zeta}(\cdot, x_2,\ldots,x_{n+1})$.
	
	Using the cocycle condition for $\zeta$ we obtain the following equation for an arbitrary $a\in \Eorl$:
	\begin{align*}
		\tau(a) &= \tilde{\zeta}(a,x_2,\ldots,x_{n+1}) \\
		&=\zeta(x_1,\ldots, x_{n+1}) + \sum_{i=2}^{n+1} (-1)^i \tilde{\zeta}(a,x_1,x_2,\ldots,\widehat{x_i},\ldots, x_{n+1})
	\end{align*}
	This allows us to compute
	\begin{align*}
		\beta&(x_1,\ldots,x_{n+1}) = f_\mathcal{W}(x_0,x_1)\cdot \zeta(x_1,\ldots,x_{n+1}) + \delta^n\eta(x_0,\ldots, x_{n+1})\\
		= &\sum_{\mathclap{a\in p_{01}^{(\ell)}}} \mathcal{W}(a) \zeta (x_1,\ldots,x_{n+1}) + \sum_{i=2}^{n+1}(-1)^i\eta(x_0,x_1,x_2,\ldots, \widehat{x_i},\ldots, x_{n+1})\\
		&+ \eta(x_1,\ldots, x_{n+1}) - \eta(x_0,x_2,\ldots,x_{n+1})\\
		= &\sum_{\mathclap{a\in p_{01}^{(\ell)}}} \mathcal{W}(a) \Big( \zeta(x_1,\ldots,x_{n+1}) + \sum_{i=2}^{n+1} (-1)^i \tilde{\zeta}(a,x_1,x_2\ldots,\widehat{x_i},\ldots, x_{n+1})\Big) \\
		&+ \eta(x_1,\ldots, x_{n+1}) - \eta(x_0,x_2,\ldots,x_{n+1})\\
		=& \sum_{\mathclap{a\in p_{01}^{(\ell)}}} \mathcal{W}(a)\tau(a) + \sum_{\mathclap{a\in p_{12}^{(\ell)}}} \mathcal{W}(a)\tau(a) - \sum_{\mathclap{a\in p_{02}^{(\ell)}}} \mathcal{W}(a)\tau(a).
	\end{align*}
	Applying Lemma~\ref{lemma:Hilfe} gives an upper bound for $\|\beta\|_\infty$ which is finite, since $\|\tau\|_\infty\leq \|\zeta\|_\infty<\infty$.
	\newline\newline
	Just like in other cohomologies, the cup product in bounded cohomology is also graded commutative. Hence we can conclude that also $\alpha\cup[\delta^1 f_\mathcal{W}]$ is trivial for a \stable\ class $\alpha\in H_{\Gamma,b}^n(X;\R)$. Alternatively, one can prove the triviality of $\alpha\cup[\delta^1 f_\mathcal{W}]$ in a symmetric way: For a \stable\ representative $\zeta$ of $\alpha$ one considers the cochain
	\begin{align*}
		\nu\colon V^{n+1}&\to \R\\
		(x_0,\ldots,x_n)&\mapsto \sum_{\mathclap{a\in p_{n-1,n}^{(\ell)}}} \mathcal{W}(a) \tilde{\zeta}(a,x_0,\ldots,x_{n-1})
	\end{align*}
	where $p_{n-1,n}\in P(x_{n-1},x_n)$.
	One can show that the map $\beta'\coloneqq (-1)^n \left(\zeta\cup f_\mathcal{W} - \delta^n\nu\right)$ is a cochain in $C_{\Gamma,b}^{n+1}(X;\R)$ with $\delta^{n+1} \beta' = \zeta\cup \delta^1f_\mathcal{W}$. This proves that the cup product $\alpha\cup[\delta^1 f_\mathcal{W}] = [\zeta \cup \delta^1f_\mathcal{W}] = [\delta^{n+1}\beta']=0$ is trivial.
\end{proof}

\begin{proof}[Proof of Theorem~\ref{main:trivialMassey}]
	This proof is based on the same blueprint as the proof of Marasco~\cite{Marasco}.
	Let $\alpha_1\in H_{\Gamma,b}^{n}(X;\R)$ and $\alpha_2\in H_{\Gamma,b}^{m}(X;\R)$ be \stable\ classes with \stable\ representatives $\zeta_1$ and $\zeta_2$, respectively.
	
	As in the proof for the triviality of the cup product we consider the $\Gamma$-invariant maps
	\begin{align*}
		\nu\colon V^{n+1}&\to \R\\
		(x_0,\ldots,x_n)&\mapsto \sum_{\mathclap{a\in p_{n-1,n}^{(\ell)}}} \mathcal{W}(a) \tilde{\zeta_1}(a,x_0,\ldots,x_{n-1})
	\end{align*}
	with $p_{n-1,n}\in P(x_{n-1},x_n)$, and
	\begin{align*}
		\eta\colon V^{m+1}&\to \R\\
		(x_0,\ldots, x_m) &\to \sum_{\mathclap{a\in p_{0,1}^{(\ell)}}}\mathcal{W}(a) \tilde{\zeta_2}(a,x_1,\ldots, x_m)
	\end{align*}
	with $p_{0,1}\in P(x_0,x_1)$.
	In the proof of Theorem~\ref{main:trivialcupproduct}, we showed that the cochains $\beta_1\coloneqq (-1)^n\left(\zeta_1\cup f _\mathcal{W}- \delta^n\nu \right)$ and $\beta_2\coloneqq f_\mathcal{W}\cup \zeta_2 + \delta^m\eta$ are bounded primitives for $[\zeta_1\cup \delta^1f]$ and $[\delta^1 f\cup \zeta_2]$, respectively. Hence,
	\begin{align*}
		\left[(-1)^n\zeta_1\cup \beta_2 - \beta_1\cup\zeta_2\right]\in \langle \alpha_1, \left[\delta^1f_\mathcal{W}\right], \alpha_2\rangle
	\end{align*}
	and we focus on proving that this class is zero.
	
	For this we take a closer look at the given representative and obtain
	\begin{align*}
		(-1)^n\zeta_1\cup \beta_2 - \beta_1\cup\zeta_2 &= (-1)^n \zeta_1\cup \delta^m\eta +  (-1)^n \delta^n\nu \cup \zeta_2 \\
		&= \delta^{n+m} \big(\zeta_1\cup \eta + (-1)^n \nu \cup \zeta_2\big).
	\end{align*}
	The goal is now to find a $\Gamma$-invariant map $\kappa\colon V^{n+m}\to \R$ such that 
	\begin{align*}
		\beta\coloneqq \zeta_1\cup \eta + (-1)^n \nu\cup \zeta_2 - \delta^{n+m-1}\kappa
	\end{align*}
	is a bounded map so that its coboundary $(-1)^n\zeta_1\cup \beta_2 - \beta_1\cup\zeta_2$ represents the trivial class in $H_{\Gamma,b}^{n+m+1}(X;\R)$. 
	
	We consider 
	\begin{align*}
		\kappa \colon V^{n+m} &\to \R\\
		(x_1,\ldots, x_{n+m}) &\mapsto \sum_{\mathclap{a\in p_{n,n+1}^{(\ell)}}} \mathcal{W}(a) \tilde{\zeta_1}(a,x_1,\ldots, x_n) \tilde{\zeta_2}(a,x_{n+1},\ldots ,x_{n+m})
	\end{align*}
	with $p_{n,n+1}\in P(x_{n}, x_{n+1})$.
	Again, this map does not depend on the choice of path in $P(x_{n}, x_{n+1})$ as $\mathcal{W}$ is path-independent and $\zeta_1$ and $\zeta_2$ are \stable. Since $\mathcal{W}, \zeta_1$ and $\zeta_2$ are $\Gamma$-invariant, the same holds for $\kappa$. So we know that the map 
	\begin{align*}
		\beta\coloneqq \zeta_1\cup \eta + (-1)^n \nu\cup \zeta_2 - \delta^{n+m-1}\kappa
	\end{align*}
	is $\Gamma$-invariant and it remains to show that it is bounded.
	
	To do this, we fix $x_0,\ldots,x_{n+m}\in V$. 
	For a better readability, we denote by $p_{i,j}$ an element in $P(x_i,x_j)$ for $i,j\in \lbrace 0,\ldots, n+m\rbrace$ and introduce the maps
	\begin{align*}
		\tau_1, \tau_2, \tau\colon \Eorl &\to \R\\
		\tau_1(a) &= \tilde{\zeta_1}(a,x_0,\ldots, x_{n-1})\\
		\tau_2(a) &=  \tilde{\zeta_2}(a,x_{n+1},\ldots, x_{n+m})\\
		\tau(a) &= \tau_1(a)\cdot \tau_2(a),
	\end{align*}
	which are all $\Phi$-stable.
	Since $\zeta_1$ and $\zeta_2$ fulfil the cocycle condition, we obtain for all $a\in\Eorl$
	\begin{align*}
		\zeta_1(x_0,\ldots, x_n)- (-1)^n\tau_1(a)  &=  \sum_{i=0}^{n-1} (-1)^i \tilde{\zeta_1}(a,x_0,\ldots,\widehat{x_i},\ldots, x_{n-1}, x_n) 
	\end{align*}
	and
	\begin{align*}
		\zeta_2(x_n,\ldots, x_{n+m}) -\tau_2(a) &=  \sum_{i=1}^{m} (-1)^i  \tilde{\zeta_2}(a,x_{n}, x_{n+1},\ldots, \widehat{x_{n+i}}, \ldots ,x_{n+m})\\
		&= (-1)^n \sum_{i=n+1}^{n+m} (-1)^i  \tilde{\zeta_2}(a,x_{n}, x_{n+1},\ldots, \widehat{x_{i}}, \ldots ,x_{n+m}).
	\end{align*}
	
	Furthermore, we can simplify
	\begin{align*}
		\nu(x_0,\ldots, x_n) &= \sum_{\mathclap{a\in p_{n-1,n}^{(\ell)}}} \mathcal{W}(a) \tau_1(a);\\
		\eta(x_n,\ldots,x_{n+m})&= \sum_{\mathclap{a\in p_{n,n+1}^{(\ell)}}} \mathcal{W}(a) \tau_2(a).
	\end{align*}
	Taking a look at the coboundary of $\kappa$, we notice that omitting an entry $i\in\lbrace 0,\ldots, n+m\rbrace$ can be divided in three different cases. 
	\begin{enumerate}
		\item For $i\in\lbrace 0,\ldots,n-1\rbrace$ we have
		\begin{align*}
			\kappa(x_0,\ldots, \widehat{x_i},\ldots, x_{n+m}) &=  \sum_{\mathclap{a\in p_{n,n+1}^{(\ell)}}} \mathcal{W}(a) \tilde{\zeta_1}(a,x_0,\ldots,\widehat{x_i},\ldots, x_{n-1}, x_n) \tau_2(a).
		\end{align*}
		\item For $i\in\lbrace n+1,\ldots, n+m\rbrace$ we have
		\begin{align*}
			\kappa (x_0,\ldots, \widehat{x_i},\ldots, x_{n+m}) &= \sum_{\mathclap{a\in p_{n-1,n}^{(\ell)}}} \mathcal{W}(a) \tau_1(a)\tilde{\zeta_2}(a,x_{n}, x_{n+1},\ldots, \widehat{x_i}, \ldots ,x_{n+m}).
		\end{align*}
		\item In the remaining case $i=n$ we have
		\begin{align*}
			\kappa (x_0,\ldots, \widehat{x_n},\ldots, x_{n+m}) &= \sum_{\mathclap{a\in p_{n-1,n+1}^{(\ell)}}}\mathcal{W}(a) \tau_1(a) \tau_2(a).
		\end{align*}
	\end{enumerate}
	
	These three equations allow us to simplify the coboundary of $\kappa$:
	
	\begin{align*}
		\delta^{n+m-1}\kappa&(x_0,\ldots,x_{n+m}) \\
		=& \sum_{i=0}^{n-1} \sum_{{a\in p_{n,n+1}^{(\ell)}}} (-1)^i  \mathcal{W}(a) \tilde{\zeta_1}(a,x_0,\ldots,\widehat{x_i},\ldots, x_{n-1}, x_n) \tau_2(a)\\
		& +  \sum_{i=n+1}^{n+m}\sum_{{a\in p_{n-1,n}^{(\ell)}}} (-1)^i \mathcal{W}(a) \tau_1(a)\tilde{\zeta_2}(a,x_{n}, x_{n+1},\ldots, \widehat{x_i}, \ldots ,x_{n+m})\\			
		& + (-1)^n \sum_{{a\in p_{n-1,n+1}^{(\ell)}}}\mathcal{W}(a) \tau_1(a) \tau_2(a)\\
		=& \sum_{\mathclap{a\in p_{n,n+1}^{(\ell)}}} \mathcal{W}(a) \big(\zeta_1(x_0,\ldots, x_n)- (-1)^n\tau_1(a)  \big)\tau_2(a)\\
		& +\sum_{\mathclap{a\in p_{n-1,n}^{(\ell)}}}  \mathcal{W}(a) \tau_1(a)(-1)^n\big( \zeta_2(x_n,\ldots, x_{n+m}) -\tau_2(a) \big)\\
		& + (-1)^n \sum_{\mathclap{a\in p_{n-1,n+1}^{(\ell)}}}\mathcal{W}(a) \tau(a)\\
		=& 	\sum_{\mathclap{a\in p_{n,n+1}^{(\ell)}}} \mathcal{W}(a) \zeta_1(x_0,\ldots, x_n)  \tau_2(a) - (-1)^n\sum_{\mathclap{a\in p_{n,n+1}^{(\ell)}}}\mathcal{W}(a)\tau(a)\\
		& +(-1)^n\sum_{\mathclap{a\in p_{n-1,n}^{(\ell)}}}  \mathcal{W}(a) \tau_1(a) \zeta_2(x_n,\ldots, x_{n+m}) - (-1)^n\sum_{\mathclap{a\in p_{n-1,n}^{(\ell)}}}  \mathcal{W}(a) \tau(a)\\
		& + (-1)^n \sum_{\mathclap{a\in p_{n-1,n+1}^{(\ell)}}}\mathcal{W}(a) \tau(a)\\
		=& \zeta_1\cup \eta (x_0,\ldots,x_{n+m}) + (-1)^n\nu\cup \zeta_2 (x_0,\ldots,x_{n+m})\\
		& - (-1)^n\Big(\sum_{\mathclap{a\in p_{n,n+1}^{(\ell)}}}\mathcal{W}(a)\tau(a) +\sum_{\mathclap{a\in p_{n-1,n}^{(\ell)}}}  \mathcal{W}(a) \tau(a) - \sum_{\mathclap{a\in p_{n-1,n+1}^{(\ell)}}}\mathcal{W}(a) \tau(a)\Big)
	\end{align*}
	
	As $\tau$ is symmetric and $\Phi$-stable, we obtain that $\beta$ is bounded by using Lemma~\ref{lemma:Hilfe}, since
	\begin{align*}
		\beta(x_0,\ldots,x_{n+m}) = &\zeta_1\cup \eta (x_0,\ldots,x_{n+m})+ (-1)^n \nu\cup \zeta_2 (x_0,\ldots,x_{n+m})\\
		&- \delta^{n+m-1}\kappa (x_0,\ldots,x_{n+m})\\
		=& (-1)^n\Big( \sum_{\mathclap{a\in p_{n,n+1}^{(\ell)}}}\mathcal{W}(a)\tau(a) +\sum_{\mathclap{a\in p_{n-1,n}^{(\ell)}}}  \mathcal{W}(a) \tau(a) - \sum_{\mathclap{a\in p_{n-1,n+1}^{(\ell)}}}\mathcal{W}(a) \tau(a)\Big). \qedhere
	\end{align*}
\end{proof}

\section{Applications}
\label{sec:Application}

\subsection*{Preliminaries on Cayley graphs}

In order to obtain vanishing results for the bounded cohomology of a group $\Gamma$ via our results on cup products and Massey products in equivariant bounded cohomology, Theorems~\ref{main:trivialcupproduct} and~\ref{main:trivialMassey}, we want to find a graph on which $\Gamma$ acts so that we can use the connection of equivariant bounded cohomology and bounded cohomology described in Section~\ref{subsec:Connection}. 

A canonical choice is a Cayley graph for the given group. We briefly recall its definition and give a short overview on the natural group action. For more details, we refer to the literature, e.g., \cite{Loeh}.

\begin{definition}[Cayley graph]
	Let $\Gamma$ be a group with generating set $S\subset \Gamma$.
	The \emph{Cayley graph of $\Gamma$ with respect to $S$} is the graph $\Cayl(\Gamma,S)$ having $\Gamma$ as its set of vertices and the set 
	\begin{align*}
		\big\lbrace \lbrace g,gs\rbrace \mid g\in \Gamma \text{ and } s\in (S\cup S^{-1})\setminus\lbrace e\rbrace \big\rbrace
	\end{align*}
	as its set of edges.
\end{definition}

\begin{example}
	\label{ex:freegroup-tree}
	Let $F$ be a non-abelian free group freely generated by $S\subset F$. Then $\Cayl(F,S)$ is a tree~\cite[Theorem~3.3.1]{Loeh}.
\end{example}

\begin{lemma}
	Let $\Gamma$ be a group with generating set $S\subset \Gamma$. Then $\Gamma$ acts by graph automorphisms on the Cayley graph $\Cayl(\Gamma,S)$ via left translation,
	\begin{align*}
		\Gamma & \to \Aut\big(\Cayl(\Gamma,S)\big)\\
		g & \mapsto (h\mapsto g\cdot h).
	\end{align*}
\end{lemma}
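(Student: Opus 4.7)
The plan is to verify the two pieces separately: that the formula $g \mapsto (h \mapsto g\cdot h)$ really defines a group homomorphism into the symmetric group of the vertex set $\Gamma$, and that each resulting bijection is a graph automorphism of $\Cayl(\Gamma,S)$.

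For the first part, I would start by noting that for each fixed $g\in\Gamma$ the map $L_g\colon h\mapsto g\cdot h$ is a bijection of $\Gamma$ with two-sided inverse $L_{g^{-1}}$, by associativity and the identity axiom in $\Gamma$. The same two axioms give $L_{gh} = L_g\circ L_h$ and $L_e = \mathrm{id}_\Gamma$, so $g\mapsto L_g$ is a homomorphism $\Gamma\to\mathrm{Sym}(\Gamma)$. This already yields a group action on the vertex set of $\Cayl(\Gamma,S)$.

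For the second part, I would take an arbitrary edge and show its image is again an edge. By definition, an edge of $\Cayl(\Gamma,S)$ has the form $\{h,hs\}$ for some $h\in\Gamma$ and $s\in(S\cup S^{-1})\setminus\{e\}$. Applying $L_g$ gives $\{gh,ghs\} = \{gh,(gh)s\}$, which is once more of this form and hence an edge. The same argument applied to $L_{g^{-1}}$ shows that edges are reflected, so $L_g$ induces a bijection of the edge set and is therefore a graph automorphism. Combining both parts shows that the homomorphism $\Gamma\to\mathrm{Sym}(\Gamma)$ factors through $\Aut(\Cayl(\Gamma,S))$, which is the claim.

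I do not anticipate a genuine obstacle here: the statement is essentially a sanity check that left multiplication is the standard symmetry of a Cayley graph, and the only subtlety is making sure the edge condition is formulated so that $s$ and $s^{-1}$ are treated symmetrically, which is built into the definition via $(S\cup S^{-1})\setminus\{e\}$.
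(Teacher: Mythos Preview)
Your proof is correct and is exactly the standard verification one would expect. The paper itself does not supply a proof of this lemma at all: it is stated as a well-known fact about Cayley graphs, with the surrounding discussion referring the reader to the literature (e.g., \cite{Loeh}) for details. So there is nothing to compare against beyond noting that your argument is the canonical one.
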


\subsection*{Preliminaries on median graphs}
For applications we will consider median graphs. We now recall the definition and some examples.

\begin{definition}[median graph]
	Let $X=(V,E)$ be a graph with corresponding graph metric $D$. Then $X$ is called a \emph{median graph} if for all $x,y,z\in V$, there exits a unique vertex $m\in V$ such that 
	\begin{align*}
		D(x,y) &= D(x,m) + D(m,y)\\
		D(x,z) &= D(x,m) + D(m,z)\\
		D(y,z) &= D(y,m) + D(m,z).
	\end{align*}
	In other words, the vertex $m$ is the unique vertex lying simultaneously on a geodesic in $[x,y]$, $[x,z]$, and $[y,z]$.
\end{definition}

\begin{example}
	Examples for median graphs are trees, the square grid or, more generally, 1-skeletons of $\cat0$ cube complexes~\cite{Genevois,Hagen}
\end{example}

\subsection{Brooks quasimorphisms}
\label{subsec:Brooks}

Let $F$ be a non-abelian free group and let~$S$ be a free generating set for $F$. In the following, we interpret an element of $F$ as a (unique) reduced words over $S\cup S^{-1}$.
Let $\omega\in F$ be a non-empty reduced word of length $\ell$. 

Then we define
\begin{align*}
	\chi_\omega \colon F &\to \lbrace -1,0,+1\rbrace\\
	g&\mapsto\begin{cases} +1, & \textrm{if } g=w,\\ -1, & \textrm{if }  g=w^{-1},\\ \:\:\,\,0, & \textrm{otherwise}. \end{cases}
\end{align*}
The (big) Brooks quasimorphism on $\omega$ is then defined as
\begin{align*}
	\phi_\omega\colon F & \to \R\\
	g&\mapsto \sum_{i=1}^{m-\ell+1} \chi_\omega(s_i\cdots s_{i+\ell-1})
\end{align*}
where $g= s_1\cdots s_m$ as reduced word over $S\cup S^{-1}$.
In other words, the (big) Brooks quasimorphism on $\omega$ counts for $g\in F$ the number of occurrences of~$\omega$ in $g$ minus the number of occurrences of $\omega^{-1}$ in $g$.

(Big) Brooks quasimorphisms were introduced by Robert Brooks~\cite{Brooks}. They provide lots of examples for non-trivial quasimorphisms. More precisely, there exists an infinite family of Brooks quasimorphisms such that their corresponding classes in $H_b^2(F;\R)$ are linearly independent~\cite[Exampe~2.1]{Heuer}\cite{Brooks}. Furthermore, Grigorchuk~\cite[Theorem~5.7]{Grigorchuk} proved that the subspace spanned by the classes given by (big) Brooks quasimorphisms lies dense in $H_b^2(F;\R)$ for a suitable topology of pointwise convergence.

\begin{remark}
	There are also small Brooks quasimorphisms introduced by  Epstein and Fujiwara~\cite{EF} counting only non-overlapping occurrences of $\omega$. 
	We only focus on big Brooks quasimorphisms as they have easier combinatorics. So in the following, we will call them Brooks quasimorphisms for short.
\end{remark}

The following can be shown using weight quasimorphisms and the vanishing results for their products, Theorems~\ref{main:trivialcupproduct} and~\ref{main:trivialMassey}.

\begin{theorem}
	\label{thm:Brooks}
	Let $F$ be a non-abelian free group, freely generated by $S$ and let $\omega\in F$ be a non-empty reduced word of length $\ell$.
	The Brooks quasimorphism $\phi_\omega$ is a quasimorphism of $F$.
	
	Furthermore, for all $\alpha_1\in H_{b}^{n}(F;\R)$ and $\alpha_2\in H_{b}^{m}(F;\R)$, the cup products  $\alpha_1 \cup [\delta^1\phi_\omega]$ and $[\delta^1\phi_\omega]\cup \alpha_2$ as well as the Massey triple product $\langle \alpha_1, [\delta^1\phi_\omega], \alpha_2 \rangle$ are trivial.
\end{theorem}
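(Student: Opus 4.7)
The plan is to realize $\phi_\omega$ as (the image under the orbit map of) a weight quasimorphism on the Cayley tree, and then to invoke the Main Theorem together with Remark~\ref{rem:productsconnection} and Theorem~\ref{thm:orbitmap_is_iso}. Concretely, let $X=\Cayl(F,S)$; by Example~\ref{ex:freegroup-tree} this is a tree with the left translation action of $F$, and its vertex stabilizers are trivial. Hence the orbit map $o_e^*\colon H_{F,b}^*(X;\R)\to H_b^*(F;\R)$ is an isomorphism by Theorem~\ref{thm:orbitmap_is_iso}, and every class in $H_b^*(F;\R)$ comes from an equivariant class on $X$.

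First I would set up the coherent pair: for $x,y\in F$ let $P(x,y)$ consist of the unique geodesic from $x$ to $y$, and take each $\varphi_{p,q}$ to be the identity (there is nothing to choose). Coherence in the sense of Definition~\ref{def:coherentpair} is then immediate, and the quasi-median property holds for $R=0$ since a tree has a unique median for any three vertices. In particular, for every $n$ and every cochain $\zeta\in C_{F,b}^n(X;\R)$ the $\Phi$-stability condition is vacuous, so every class in $H_{F,b}^*(X;\R)$ is $\Phi$-stable (for any weight $\mathcal{W}$ we may choose).

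Next I would define the weight $\mathcal{W}\colon(E^{or})^\ell\to\R$. Each oriented edge of $X$ carries a canonical label in $S\cup S^{-1}$ (the edge $(g,gs)$ is labeled by $s$). Set $\mathcal{W}(e_1,\ldots,e_\ell)$ to be $+1$ if the edges $e_1,\ldots,e_\ell$ are consecutive (the endpoint of $e_i$ is the start of $e_{i+1}$) and their labels spell the reduced word $\omega$, to be $-1$ if under the same consecutivity assumption they spell $\omega^{-1}$, and $0$ otherwise. I would then verify the five axioms of an $\ell$-weight: $F$-invariance is immediate since left translation preserves edge labels; the alternating property holds because reversing an oriented edge inverts its label and $\omega$ and $\omega^{-1}$ are reverses of one another under this involution; boundedness is obvious; path-independence is vacuous (single path); and for the finiteness condition, note that at most $\ell-1$ consecutive $\ell$-tuples on a given path contain any fixed interior vertex, so $c=\max(2,\ell)$ works.

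Finally I would identify the weight quasimorphism. For any $x,y\in F$ the unique geodesic from $x$ to $y$ in the Cayley tree spells out the reduced form of $x^{-1}y$, and summing $\mathcal{W}$ over all $\ell$-fragments of this geodesic counts occurrences of $\omega$ minus occurrences of $\omega^{-1}$ in that word. Hence $f_\mathcal{W}(x,y)=\phi_\omega(x^{-1}y)$, which by Remark~\ref{rem:conn_quasimorphisms} means $o_e^2([\delta^1 f_\mathcal{W}])=[\delta^1\widehat{\phi_\omega}]$. Since $o_e^*$ is an isomorphism, the classes $\alpha_1,\alpha_2\in H_b^*(F;\R)$ pull back to $\Phi$-stable classes $\tilde\alpha_1,\tilde\alpha_2\in H_{F,b}^*(X;\R)$. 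Theorems~\ref{main:trivialcupproduct} and~\ref{main:trivialMassey} yield the vanishing of $\tilde\alpha_i\cup[\delta^1 f_\mathcal{W}]$, $[\delta^1 f_\mathcal{W}]\cup\tilde\alpha_i$, and $\langle\tilde\alpha_1,[\delta^1 f_\mathcal{W}],\tilde\alpha_2\rangle$, and pushing forward along $o_e^*$ via Remark~\ref{rem:productsconnection} gives the required triviality in $H_b^*(F;\R)$. The only slightly delicate step is the verification of the finiteness condition; everything else is essentially forced by the tree structure.
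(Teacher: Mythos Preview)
Your proposal is correct and follows essentially the same route as the paper's own proof: both realize $\phi_\omega$ as a weight quasimorphism on the Cayley tree $\Cayl(F,S)$ with $P(x,y)$ the unique geodesic, $\Phi$ consisting of identities, and $\mathcal{W}$ supported on consecutive $\ell$-tuples spelling $\omega^{\pm1}$, then invoke Theorems~\ref{main:trivialcupproduct} and~\ref{main:trivialMassey} together with the orbit-map isomorphism. The only cosmetic differences are that the paper records the quasi-median constant as $R=1$ rather than your (equally valid, in fact sharper) $R=0$, and bounds the finiteness constant by $\ell-1$ rather than your $\max(2,\ell)$.
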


\begin{proof}
	We consider the Cayley graph $X=\Cayl(F,S)$ together with the left translation action of~$F$ on $X$ via left translation. We know that $X$ is a tree and hence it is uniquely geodesic and a median graph.
	For two vertices $x,y$ in $\Cayl(F,S)$, we define $P(x,y)\coloneqq [x,y]$ and consider the family $P=\big(P(x,y)\big)_{x,y\in F}$. Then $P$ fulfils the quasi-median property for $R=1$ since $X$ is uniquely geodesic and a median graph.
	We define $\Phi$ to be the family containing the identity map on $[x,y]^{(\ell)}$ for $x,y\in V$. Then $(P,\Phi)$ is a coherent pair of size $\ell$ and by the definition of $\Phi$, we do not need to worry about path-independence for \weight s and furthermore, every map from $\Eorl$ to $\R$ is $\Phi$-stable. 
	
	We can assign to every oriented edge $e=(\alpha,\omega)$ the label 
	\begin{align*}
		\lambda(e) = \alpha^{-1} \omega\in S\cup S^{-1}. 
	\end{align*}
	We call a tuple $(e_1,\ldots, e_\ell)\in \Eorl$ \emph{connected}, if for all $i\in \lbrace 1,\ldots, \ell-1\rbrace $ the equation $\omega(e_i) = \alpha(e_{i+1})$ holds.
	
	We now define
	\begin{align*}
		\mathcal{W}\colon \Eorl & \to \R\\
		(e_1,\ldots,e_\ell) &\mapsto \begin{cases}
			\chi_\omega \big(\lambda(e_1)\cdots \lambda(e_\ell)\big) &\text{ if } (e_1,\ldots,e_\ell) \text{ is connected},\\
			0&\text{ otherwise},
		\end{cases}
	\end{align*}
	and claim that this map is an \weight. 
	
	We check that the following holds for $\mathcal{W}$:
	\begin{enumerate}
		\item $\Gamma$-invariance: This is true, since the label of an oriented edge is $\Gamma$-invariant;
		\item boundedness, as $\chi_\omega$ is bounded by $1$;
		\item the alternating property as for an oriented edge $e$ it is $\lambda(\overline{e}) = \lambda(e)^{-1}$ and $\chi_\omega$ is alternating;
		\item path-independence, as $\Phi$ only consists of identity maps; and
		\item the finiteness property: For this, we note that for $x,y\in V$, there is a 1-to-1 correspondence between connected tuples in $[x,y]^{(\ell)}$ and subpaths of length $\ell$ of $[x,y]$. A connected tuple in $[x,y]^{(\ell)}$ contains a vertex $m$ in its interior, if and only if the corresponding subpath of $[x,y]$ passes $m$ and if $m$ does not correspond with one of the endpoints. Since there are at most $\ell-1$ such subpaths we obtain
		\begin{align*}
			&\left| \left\lbrace a\in [x,y]^{(\ell)} \mid m \text{ is contained in } a \right\rbrace \cap \supp(\mathcal{W})\right|\leq \ell-1. 
		\end{align*}
	\end{enumerate}
	
	Let $f_\mathcal{W}$ denote the corresponding weight quasimorphism. We reprove using Remark~\ref{rem:conn_quasimorphisms} that the Brooks quasimorphism~$\phi_\omega$ is a quasimorphism of $F$ as it coincides with the quasimorphism $(f_\mathcal{W})_e$. This can be seen by comparing for an element $g\in F$ the labels of the edges of the geodesic from $e$ to $g$ with the letters of $g$ as a reduced word in $S\cup S^{-1}$.
	
	Since every class $\alpha\in H_{F,b}^k\big(X;\R\big)$ is \stable, we can apply Theorems~\ref{main:trivialcupproduct} and~\ref{main:trivialMassey} to deduce the vanishing of the products with $[\delta^1f_\mathcal{W}]$ in $H_{F,b}^*(X;\R)$.
	In order to obtain the vanishing of products in $H_b^*(F;\R)$, we firstly notice using Theorem~\ref{thm:orbitmap_is_iso} that the orbit map
	\begin{align*}
		o^2_* \colon H_{F,b}^*(X;\R) \to H_b^*(F;\R)
	\end{align*}
	is an isomorphism as the stabiliser of an arbitrary vertex is trivial.
	Since the orbit map preserves cup products and Massey triple products, Remark~\ref{rem:productsconnection}, we obtain the vanishing results for the products in $H_b^*(F;\R)$.
\end{proof}

\subsection{\texorpdfstring{${\Delta}$}{Delta}-decomposable quasimorphisms}
\label{subsec:Delta}

In this section, we discuss $\Delta$-decomposable quasimorphisms, which were first introduced by Heuer~\cite{Heuer, AB}. 
Again, let $F$ be a non-abelian free group with free generating set $S$ and elements of $F$ shall be interpreted as reduced words over $S\cup S^{-1}$.

\begin{notation}[sequences]
	Let $A\subset F$ be a symmetric subset, i.e., $a^{-1}\in A$ if $a\in A$. We denote by $A^\ast$ the set of finite sequences in $A$, including the empty sequence. For $s=(a_1,\ldots,a_n)\in A^\ast$ we define $n$ to be the length of $s$ and we denote by $s^{-1}$ the sequence $(a_n^{-1},\ldots,a_1^{-1})$. By symmetry of~$A$ we have $s^{-1}\in A^\ast$. For a sequence $t=(b_1,\ldots, b_m)\in A^\ast$ we define the \emph{common sequence of $s$ and $t$} to be the sequence $(a_1,\ldots,a_r)$, where $r$ is the largest integer with $r\leq \min\lbrace m,n\rbrace$ such that $a_j = b_j$ for all $j\leq r$. By convention this is empty if $a_1\neq b_1$.
	Moreover, we define the \emph{concatenation $s\cdot t$ of $s$ and $t$} by $s\cdot t\coloneqq (a_1,\ldots,a_n,b_1,\ldots,b_m)$. Note that this is the unreduced concatenation of sequences (in contrast to the concatenation of reduced words in the free group). 
\end{notation}

\begin{definition}[$\Delta$-decomposition]
	\label{def:Deltadecomp}
	Let $\mathcal{P}\subset F$ be a symmetric subset, called \emph{pieces of $F$}, not containing the neutral element. A \emph{$\Delta$-decomposition of $F$ into the pieces $\mathcal{P}$} is a map $\Delta\colon F\to \mathcal{P}^\ast$ assigning to every $g\in F$ a sequence $(g_1,\ldots,g_n)\in \mathcal{P}^\ast$ such that
	\begin{enumerate}
		\item \label{prop:Delta_genset} the reduced expression of $g$ in the letters $S\cup S^{-1}$ is given by the concatenation (without cancellation) of the words $g_1,\ldots, g_n$,
		\item the sequence $\Delta(g^{-1})$ is given by $\big(\Delta(g)\big)^{-1}$, and
		\item for all $i,j\in \lbrace 1,\ldots, n\rbrace$ with $i\leq j$ we have $\Delta(g_i\cdots g_j) = (g_i,\ldots, g_j)$.
	\end{enumerate}
	Furthermore, we require the existence of a constant $R\in \N$ with the following property:
	\begin{enumerate}
		\item For all $g,h\in F$ let 
		\begin{itemize}
			\item $c_1\in \mathcal{P}^\ast$ be such that $c_1^{-1}$ is the common sequence of $\Delta(g)$ and~$\Delta (gh)$,
			\item $c_2\in \mathcal{P}^\ast$ be such that $c_2^{-1}$ is the common sequence of $\Delta(g^{-1})$ and $\Delta (h)$,
			\item $c_3\in \mathcal{P}^\ast$ be such that $c_3^{-1}$ is the common sequence of $\Delta(h^{-1})$ and $\Delta \big((gh)^{-1}\big)$.
		\end{itemize}
		Let $r_1,r_2, r_3\in \mathcal{P}^\ast$ be the sequences uniquely determined by 
		\begin{align*}
			\Delta(g) &= c_1^{-1} \cdot r_1\cdot c_2,\\
			\Delta(h) &= c_2^{-1} \cdot r_2\cdot c_3,\\
			\Delta(gh) &= c_1^{-1} \cdot r_3^{-1}\cdot c_3.
		\end{align*}
		Then the length of $r_1,r_2,r_3$ is bounded by $R$.
	\end{enumerate}
	For the pair $(g,h)$ we call $c_1,c_2,c_3$ the \emph{$c$-part of the $\Delta$-triangle} of $(g,h)$ and $r_1,r_2,r_3$ the \emph{$r$-part of the $\Delta$-triangle} of $(g,h)$.
\end{definition}

We want to mention that the $\Delta$-decomposition $(g_1,\ldots,g_n)$ of a word $g\in F$ into pieces $\mathcal{P}$ is not required to be a decomposition into pieces $\mathcal{P}$ of shortest length. In other words, in the Cayley graph $\Cayl(F;\mathcal{P})$, the path from $e$ to $g$ that is given by the $\Delta$-decomposition might not be a geodesic.

\begin{figure}[h] 
	\begin{center}
		\begin{tikzpicture}[>=stealth]
			\newcommand{\arrowIn}{\tikz \draw[-stealth] (-1pt,0) -- (1pt,0);}
			
			\coordinate (a) at (-1,0);
			\coordinate (b) at (1,0);
			\coordinate (c) at (0,1.6);
			
			\coordinate (A) at (-2.5,-0.75);
			\coordinate (B) at (2.5,-0.75);
			\coordinate (C) at (0,3);
			
			\draw[-] (b) to node[above] {\scriptsize$r_3$} node[sloped, pos=0.5, allow upside down]{\arrowIn} (a);
			\draw[-] (a) to node[sloped,below,allow upside down] {\scriptsize$r_1$} node[sloped, pos=0.5, allow upside down]{\arrowIn} (c);
			\draw[-] (c) to node[sloped,below] {\scriptsize$r_2$} node[sloped, pos=0.5, allow upside down]{\arrowIn} (b);
			
			\draw[-]  (a) to node[sloped,above] {\scriptsize$c_1$} node[sloped, pos=0.5, allow upside down]{\arrowIn} (A);
			\draw[-]  (b) to node[sloped,above] {\scriptsize$c_3$} node[sloped, pos=0.5, allow upside down]{\arrowIn} (B);
			\draw[-]  (c) to node[right=1pt] {\scriptsize$c_2$} node[sloped, pos=0.5, allow upside down]{\arrowIn} (C);
			
			\draw [-,dashed] (-2.4,-1) .. controls (-1,-0.5) and (1,-0.5) .. node[sloped,below] {\scriptsize$\Delta(gh)$} node[sloped, pos=0.5, allow upside down]{\arrowIn} (2.4,-1); 
			\draw [-,dashed] (-2.6,-0.45) .. controls (-1.5,0) and (-0.5,1.6) .. node[sloped,above] {\scriptsize$\Delta(g)$} node[sloped, pos=0.5, allow upside down]{\arrowIn} (-0.3,3); 
			\draw [-,dashed] (0.3,3) .. controls (0.5,1.6) and (1.5,0) ..   node[sloped,above] {\scriptsize$\Delta(h)$} node[sloped, pos=0.5, allow upside down]{\arrowIn} (2.6,-0.45); 
			
		\end{tikzpicture}
	\end{center}
	\caption{$\Delta$-decomposition} \label{pic:decomposition}
\end{figure}
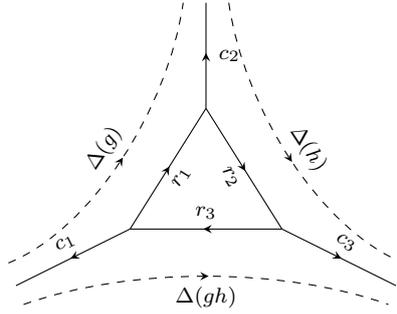

\begin{definition}[$\Delta$-decomposable quasimorphism]
	Let $\mathcal{P}\subset F$ be a symmetric subset not containing the neutral element. Let $\Delta\colon F\to \mathcal{P}^\ast$ be a $\Delta$-decomposition of $F$ into pieces $\mathcal{P}$ and let $\lambda\in \ellalt$ be an alternating bounded map on $\mathcal{P}$, i.e., $\|\lambda\|_\infty <\infty$ and $\lambda(p^{-1}) = -\lambda(p)$ for all $p\in \mathcal{P}$. For $g\in F$ we write $\Delta(g)= (g_1,\ldots,g_n)$ for its decomposition into pieces. Then the map
	\begin{align*}
		\phi_{\lambda,\Delta}\colon F&\to \R\\
		g&\mapsto \sum_{i=0}^{n}\lambda(g_i)
	\end{align*}
	is called a \emph{$\Delta$-decomposable quasimorphism}.
\end{definition}

Again, there exists an infinite family of $\Delta$-decomposable quasimorphisms whose corresponding family of classes in $H_b^2(F;\R)$ is linearly independent~\cite[Examples~2.2 and~3.9]{Heuer}. So $\Delta$-decomposable quasimorphisms again provide many examples for non-trivial quasimorphisms of $F$.

The following is a consequence of the vanishing of products with classes induced by weight quasimorphisms, Theorems~\ref{main:trivialcupproduct} and~\ref{main:trivialMassey}. In particular, Theorem~\ref{thm:Delta} combines the results of Amontova and Bucher~\cite[Theorem~A]{AB} and Marasco~\cite[Theorem~1]{Marasco}.

\begin{theorem}
	\label{thm:Delta}
	Let $\Delta\colon F\to \mathcal{P}^\ast$ be a $\Delta$-decomposition of $F$ into a symmetric set $\mathcal{P}$ and let $\lambda\in \ellalt$. Then $\phi_{\lambda,\Delta}$ is a quasimorphism of $F$.
	
	Furthermore, for all $\alpha_1\in H_{b}^{n}(F;\R)$ and $\alpha_2\in H_{b}^{m}(F;\R)$, the cup products  $\alpha_1 \cup [\delta^1\phi_{\lambda,\Delta}]$ and $[\delta^1\phi_{\lambda,\Delta}]\cup \alpha_2$ as well as the Massey triple product $\langle \alpha_1, [\delta^1\phi_{\lambda,\Delta}], \alpha_2 \rangle$ are trivial.
\end{theorem}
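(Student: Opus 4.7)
The plan is to mimic the proof of Theorem~\ref{thm:Brooks}: realise $\phi_{\lambda,\Delta}$ as $(f_\mathcal{W})_e$ for a suitable weight quasimorphism on a Cayley graph, and then transfer the vanishing obtained from Theorems~\ref{main:trivialcupproduct} and~\ref{main:trivialMassey} to $H_b^*(F;\R)$ via the orbit map. The natural choice of graph is $X \coloneqq \Cayl(F,\mathcal{P})$, equipped with the left translation action of~$F$. Since a $\Delta$-decomposition expresses every element of $F$ as a non-cancelling concatenation of pieces, the set $\mathcal{P}$ generates~$F$ and $X$ is connected. Moreover $F$ acts freely on the vertex set, so Theorem~\ref{thm:orbitmap_is_iso} will apply at the end.

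For $x,y \in F$ I would define $P(x,y)$ to be the singleton containing the path $x, xg_1, xg_1g_2, \ldots, y$ determined by $\Delta(x^{-1}y) = (g_1,\ldots,g_n)$; these vertices are pairwise distinct by condition~\ref{prop:Delta_genset} of Definition~\ref{def:Deltadecomp} together with $e \notin \mathcal{P}$. Coherence then splits into four easy checks: $\Gamma$-compatibility from left-invariance, inverse compatibility from $\Delta(g^{-1}) = \Delta(g)^{-1}$, stability under sub-paths from condition~(3) of Definition~\ref{def:Deltadecomp}, and equal length trivially because each $P(x,y)$ is a singleton. Choose $\ell \coloneqq 1$ and let $\Phi$ consist of identity bijections on $p^{(1)}$; then $(P,\Phi)$ is a coherent pair of size~$1$ in which \emph{every} map $\Eor \to \R$ is automatically $\Phi$-stable. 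The candidate $\ell$-weight is $\mathcal{W}(g,gp) \coloneqq \lambda(p)$, whose five defining properties are straightforward: $\Gamma$-invariance and boundedness are immediate, the alternating property follows from $\lambda(p^{-1}) = -\lambda(p)$, path-independence is vacuous, and the finiteness condition is vacuous for $\ell = 1$ since a single edge has no interior vertex. By construction $(f_\mathcal{W})_e(g) = \sum_i \lambda(g_i) = \phi_{\lambda,\Delta}(g)$, so by Remark~\ref{rem:conn_quasimorphisms} the map $\phi_{\lambda,\Delta}$ is a quasimorphism of~$F$.

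The main step I expect to require care is verifying the \qm\ for $P$. Given a triple $(x,y,z)$, I would translate on the left by $x^{-1}$ to reduce to $(e, g, gh)$ with $g \coloneqq x^{-1}y$ and $h \coloneqq y^{-1}z$. The concluding clause of Definition~\ref{def:Deltadecomp} then furnishes a $c$/$r$-decomposition of the $\Delta$-triangle of $(g,h)$, and compatibility of $P$ with the $\Gamma$-action lets one translate it back to the original triangle: the common parts $c_1,c_2,c_3$ yield the shared initial segments $s_x,s_y,s_z$, the medians $m_x,m_y,m_z$ are the endpoints of these $c$-parts, and the $r$-parts supply paths of length at most $R$ in $P$ between them. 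A small bookkeeping point is to check that each $c_i,r_i$ is itself a valid path of $P$, which follows from condition~(3) of Definition~\ref{def:Deltadecomp} applied to the corresponding sub-sequences of $\Delta(g)$, $\Delta(h)$, and $\Delta(gh)$. Once the quasi-median property is established, Setup~\ref{setup2} is satisfied and, since every cochain is $\Phi$-stable, Theorems~\ref{main:trivialcupproduct} and~\ref{main:trivialMassey} give the vanishing of the desired cup and Massey products in $H_{F,b}^*(X;\R)$. The isomorphism $o_e^*$ from Theorem~\ref{thm:orbitmap_is_iso} together with Remark~\ref{rem:productsconnection} then transports this vanishing to $H_b^*(F;\R)$, completing the proof.
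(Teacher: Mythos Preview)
Your proposal is correct and follows essentially the same approach as the paper's own proof: both use the Cayley graph $\Cayl(F,\mathcal{P})$, define $P(x,y)$ as the singleton path coming from $\Delta(x^{-1}y)$, take $\ell=1$ with identity bijections for $\Phi$, set $\mathcal{W}(e)=\lambda(\alpha(e)^{-1}\omega(e))$, and then invoke Theorems~\ref{main:trivialcupproduct} and~\ref{main:trivialMassey} together with the orbit-map isomorphism. Your write-up actually spells out the quasi-median verification (via the $c$/$r$-parts of the $\Delta$-triangle and property~(3) of Definition~\ref{def:Deltadecomp}) more carefully than the paper, which simply asserts it.
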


\begin{proof}
	We consider the Cayley graph $\Cayl(F,\mathcal{P})$ together with the left translation action of $F$. This is a well-defined Cayley graph, since $\mathcal{P}$ is a generating set of $F$ by Property~\ref{prop:Delta_genset} of $\Delta$-decompositions (Definition~\ref{def:Deltadecomp}).
	
	For two vertices $x,y$ in $\Cayl(F;\mathcal{P})$ let $\Delta(x^{-1}y) = (s_1,\ldots, s_n)$. Then we define the path
	\begin{align*}
		p_{xy}\coloneqq x,\ xs_1,\ xs_1s_2,\ \ldots,\ xs_1\cdots s_n
	\end{align*}
	from $x$ to $y$. Let $P(x,y) = \lbrace p_{xy}\rbrace $ and consider the family $P= \big( P(x,y)\big)_{x,y\in F}$. The properties of $\Delta$-decompositions can be directly transferred to show that this family fulfils the quasi-median property for the constant $R$ that bounds the length of the $r$-parts of $\Delta$-triangles. One can check that $P$ is coherent using the properties of $\Delta$-decompositions.
	
	Moreover, we consider the coherent pair $(P,\Phi)$ of size~$1$, where $\Phi$ contains the identity map on $p_{xy}^{(1)}$ for all $x,y\in V$ . In this case, $1$-fragments of a path~$p$ are simply edges of this path. Moreover, we again do not need to worry about path-independence or $\Phi$-stability.
	
	We consider the map 
	\begin{align*}
		\mathcal{W}\colon \Eor &\to \R\\
		e&\mapsto \lambda\big(\alpha(e)^{-1}\omega(e)\big)
	\end{align*}
	and show that it is a $1$-weight. For this, we check that $\mathcal{W}$
	\begin{enumerate}
		\item is $\Gamma$-invariant;
		\item is alternating as $\lambda$ is alternating;
		\item is bounded, since $\lambda$ is a bounded map;
		\item is clearly path-independent, as $\Phi$ only contains identity maps;
		\item fulfils the finiteness property, as $1$-fragments contain no vertex.
	\end{enumerate}
	
	Hence, the map 
	\begin{align*}
		f_{\lambda,\Delta}\colon F^2 &\to \R\\
		(x,y) &\mapsto \sum_{\mathclap{a\in p_{xy}^{(1)}}} \mathcal{W}(a)
	\end{align*}
	is a quasimorphism of $F\curvearrowright \Cayl(F;\mathcal{P})$. Using Remark~\ref{rem:conn_quasimorphisms}, we see that $\phi_{\lambda,\Delta} =  (f_{\lambda,\Delta})_e$ is a quasimorphism of $F$. 
	
	The triviality of the products can be proved in the same way as for Brooks quasimorphisms.
\end{proof}

\subsection{Median quasimorphisms}
\label{subsec:Median}

Finally, we use weight quasimorphisms and Theorems~\ref{main:trivialcupproduct} and~\ref{main:trivialMassey} about the vanishing of their products in order to {(re-)prove} results for median quasimorphisms of group actions on $\cat0$ cube complexes. Median quasimorphisms in this paper are Brooks-type quasimorphisms counting occurrences of a certain segment of halfspaces.

As we generalize the results of Br\"uck, Fournier-Facio and L\"oh~\cite{BFFL}, we will only recall the definitions that are strictly necessary to understand median quasimorphisms and vanishing results.
For readers who are not familiar with $\cat0$ cube complexes and halfspaces, we refer to Br\"uck, Fournier-Facio and L\"oh~\cite{BFFL} for details and many pictures.

\subsubsection*{\texorpdfstring{${\cat0}$}{CAT(0)} cube complexes}
Let $X$ be a $\cat0$ cube complex, i.e., a simply connected cubical complex such that links of vertices are flag complexes.
We denote by $V$ the set of vertices of $X$. Note that $V$ is just the $0$-skeleton $X(0)$ of $X$. Furthermore, the $1$-skeleton of $X$ forms an undirected simplicial graph with vertex set $V$. We denote by $E$ the set of edges of the graph given by the 1-skeleton of $X$. We equip this graph with the graph metric $D$ where edges have length $1$. Note that $(V,E)$ is a median graph~\cite[Theorem~1.18]{Hagen}.

The dimension of the $\cat0$ cube complex $X$ is defined as the highest dimension of a cube in $X$. We will always work with finite dimensional cube complexes.
In the following, let $\Gamma$ be a group.

\begin{definition}[halfspaces]
	Let $X$  be a $\cat0$ cube complex with corresponding median graph $(V,E)$ and graph metric $D$. 
	\begin{itemize}
		\item For an edge $e \in E$ that consists of two distinct vertices $\alpha$ and $\omega$ we define a map
		\begin{align*}
			g_e\colon V&\to \lbrace \alpha, \omega\rbrace \\
			x & \mapsto  \begin{cases} \alpha, & \textrm{if }  D(x,\alpha)<D(x,\omega),  \\
				\omega, & \textrm{if } D(x,\omega)<D(x,\alpha),
			\end{cases}
		\end{align*} 
		called the \emph{gate map} of the closest-point projection for $e$.  (This map is well-defined since $(V,E)$ is a median graph.)
		Each of the sets $g_e^{-1}(\alpha)$ and $g_e^{-1}(\omega)$ is called a \emph{halfspace}. 
		We say that $e$ is \emph{dual} to the halfspaces $g_e^{-1}(\alpha)$ and~$g_e^{-1}(\omega)$. Note that two edges can be dual to the same halfspaces.
		We define $X_\Half$ to be the set of halfspaces of $X$. 
		\item 	Two halfspaces $h_1$ and $h_2$ are said to be \emph{transverse} if each of the four intersections $h_1\cap h_2,\ h_1\cap h_2,\ h_1\cap h_2$, and $h_1\cap h_2$ is non-empty. We then write $h_1\pitchfork h_2$.
		\item  For a halfspace $h\in X_\Half$ we denote by $\overline{h}$ its complement $\overline{h}\coloneqq V\setminus h$. It is clear by definition that $\overline{h}$ is also a halfspace.
		\item For two vertices $x,y\in V$ we say a halfspace $h$ \emph{separates $y$ from $x$} if $y\in h$ and $x\in \overline{h}$ and define the \emph{$\Half$-interval}  $[x,y]_\Half$ to be the set consisting of all halfspaces separating $y$ from $x$.
	\end{itemize}
\end{definition}

\begin{definition}[tightly nested, segment, interior]
	Let $X$ be a $\cat0$ cube complex.
	\begin{itemize}
		\item Two halfspaces $h_1\supset h_2$ of $X$ are called \emph{tightly nested} if there is no other halfspace $h$ such that $h_1\supset~h\supset~h_2$ and $h_1\neq h\neq h_2$.
		An \emph{$\Half$-segment} of length $\ell\in\N$ is a sequence $(h_1\supset\cdots\supset h_\ell)$ of tightly nested halfspaces. 
		\item We denote by $X_\Half^{(\ell)}$ the set of all $\Half$-segments of length $\ell$. The \emph{reverse} $\overline{s}$ of an $\Half$-segment $s= (h_1\supset \ldots\supset h_\ell)$ is defined by $\overline{s} \coloneqq (\overline{h_\ell}\supset \ldots\supset \overline{h_1})$ and it is again an $\Half$-segment.
		\item The action of $\Gamma$ on $X$ via combinatorial automorphisms induces an action of~$\Gamma$ on $X_\Half^{(\ell)}$. The orbit of $s\in X_\Half^{(\ell)}$ is denoted by $\Gamma s$ and we call its elements \emph{translates of $s$.} 	
		\item For vertices $x,y$ of $X$ we write $[x,y]_\Half^{(\ell)}$ for the set of all segments of length $\ell$ whose halfspaces are contained in $[x,y]_\Half$.
		\item We say that a vertex $x$ of $X$  \emph{lies in the interior} of an $\Half$-segment $(h_1\supset \ldots\supset h_\ell)$ if $x\in h_1\cap \overline{h_\ell}$.
	\end{itemize}
\end{definition}

We are now able to describe median quasimorphisms.

\begin{definition}[median quasimorphism]
	\label{def:medianqm}
	Let $\Gamma\curvearrowright X$ be an action via combinatorial automorphisms on a $\cat0$ cube complex $X$. 
	Let $s\in X_\Half^{(\ell)}$ be an $\Half$-segment of length $\ell\in\N$. We define the \emph{median quasimorphism $f_s\colon V\times V\to \R$ for $s$} for all $(x,y)\in V\times V$ to be the number of translates of $s$ in $[x,y]_\Half$ minus the number of translates of $s$ in $[y,x]_\Half$.
\end{definition}

\begin{remark}
	Under the assumptions of Definition~\ref{def:medianqm}, let $s\in X_\Half^{(\ell)}$. If $\Gamma s = \Gamma \overline{s}$, then the corresponding median quasimorphism~$f_s$ is the zero-map.
	If $\Gamma s \neq \Gamma \overline{s}$, we can give a concrete formula for $f_s$ via the following:
	Consider the map
	\begin{align*}
		\epsilon_s \colon X_\Half^{(\ell)} &\to \lbrace -1,0,+1\rbrace \\
		t&\mapsto \begin{cases}
			+1,& \text{if } \Gamma t = \Gamma s,\\
			-1 & \text{if } \Gamma t = \Gamma \overline{s},\\
			0 & \text{otherwise.}
		\end{cases}
	\end{align*}
	Then $f_s$ is given by 
	\begin{align*}
		f_s\colon V\times V&\to \R\\
		(x,y)& \mapsto \sum_{t\in [x,y]_\Half^{(\ell)}}\epsilon_s(t).
	\end{align*}
\end{remark}

\begin{remark}
	Median quasimorphisms are a generalisation of Brooks quasimorphisms. For a non-abelian free group $F$ with free generating set $S$, the Cayley graph $\Cayl(F,S)$ is a tree and hence a $\cat0$ cube complex on which $F$ acts via left translation. For $\omega\in F$ a non-trivial element we consider the $\Half$-segment $s$ of $X$ given by the consecutive edges of the unique geodesic from $e$ to $\omega$. Then the pullback $(f_s)_e$ of the median quasimorphism coincides with the Brooks quasimorphism $\phi_\omega$. Fern\'{o}s, Forester and Tao~\cite{FFT} also introduced a version of median quasimorphisms generalising small Brooks quasimorphisms. These versions only count non-overlapping occurrences of a $\Half$-segment $s$. 
\end{remark}

Br\"uck, Fournier-Facio and L\"oh observed that median quasimorphisms are not always quasimorphisms of the action on the $\cat0$ cube complex, see~\cite[Example~3.12]{BFFL}. This is why they used the following finiteness assumption on $\cat0$ cube complexes which was introduced by Fioravanti~\cite[Definition~4.14]{Fior}:

\begin{definition}
	Let $X$ be a finite dimensional $\cat0$ cube complex and $\sigma\in\N$. A \emph{length-$\sigma$ staircase} in the $\cat0$ cube complex $X$ is a tuple $(h_1\supset\cdots\supset h_\sigma, k_1\supset\cdots\supset k_\sigma)$ of proper chains of halfspaces with the following properties: 
	\begin{itemize}
		\item For all $i\in\lbrace 1,\ldots,\sigma\rbrace$ and all $j\in\lbrace 1,\ldots,i-1\rbrace$, we have $h_i\pitchfork k_j$
		\item For all $i\in\lbrace 1,\ldots,\sigma\rbrace$, we have $h_i\supsetneq k_i$.
	\end{itemize}
	The \emph{staircase length of $X$} is the maximal length of a staircase in $X$ (or else we set it to be $\infty$).
\end{definition}

\subsubsection*{Median quasimorphisms and vanishing results}
The following can be proved with Theorem~\ref{main:quasimorphi} about weight quasimorphisms:

\begin{theorem}[{\cite[Proposition~3.16]{BFFL}}]
	\label{thm:medianqm}
	Let $X$ be a finite dimensional $\cat0$ cube complex of finite staircase length $\sigma$ and let $\Gamma$ act on $X$ via combinatorial automorphisms. Let $\ell\in \N$ and let $s\in X^{(\ell)}_\Half$ be a $\Half$-segment. Then $\delta^1f_s\colon V^3 \to\R$ is bounded. 
	This means that $f_s$ is a quasimorphism of $\Gamma\curvearrowright X$.
\end{theorem}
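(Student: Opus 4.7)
The plan is to realize the median quasimorphism $f_s$ as a weight quasimorphism in the sense of Setup~\ref{setup} and invoke Theorem~\ref{main:quasimorphi}. I work with the graph $(V, E)$ given by the $1$-skeleton of $X$, which is median, and set $P(x,y) \coloneqq [x,y]$, the set of geodesics from $x$ to $y$. Because any triple of vertices in a median graph admits a (unique) median vertex, the family $P$ fulfils the quasi-median property with $R = 0$; the remaining coherence axioms---compatibility with the $\Gamma$-action, inversion, closure under subpaths, and equal geodesic lengths between the same endpoints---are standard. For the bijection data $\Phi$, I use the natural matching that sends an edge of $p \in P(x,y)$ to the unique edge of $q \in P(x,y)$ dual to the same halfspace; this descends to a bijection on the support of the weight defined below and can be extended arbitrarily to a bijection on all of $p^{(\ell)}$.

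Next, for an oriented edge $e$, let $h(e)$ denote the halfspace containing $\omega(e)$ but not $\alpha(e)$, and define
\begin{equation*}
\mathcal{W}(e_1,\ldots,e_\ell) = \begin{cases} \epsilon_s\bigl(h(e_1),\ldots,h(e_\ell)\bigr) & \text{if } (h(e_1) \supset \cdots \supset h(e_\ell)) \text{ is an $\Half$-segment,} \\ 0 & \text{otherwise.} \end{cases}
\end{equation*}
Four of the five axioms of an $\ell$-weight are short: $\Gamma$-invariance follows from $\Gamma$-invariance of $\epsilon_s$; boundedness by $1$ is immediate; the alternating property uses $h(\overline{e}) = \overline{h(e)}$ together with $\epsilon_s(\overline{t}) = -\epsilon_s(t)$; and path-independence of $\mathcal{W}$ with respect to $\Phi$ follows from the key observation that, once halfspaces are tightly nested as $h_1 \supset \cdots \supset h_\ell$ and all lie in $[x,y]_\Half$, any geodesic from $x$ to $y$ crosses them in the forced order $h_1,\ldots,h_\ell$. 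Hence the natural edge-matching restricts to a weight-preserving bijection on supports, and the chosen extension of $\varphi_{p,q}$ off the support is irrelevant.

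The hard part is the finiteness condition, and this is exactly where the finite staircase length enters. Fixing a vertex $m$ on a geodesic $p \in P(x,y)$, every $\ell$-fragment of $p$ containing $m$ whose weight is non-zero corresponds to an $\Half$-segment $(h_1 \supset \cdots \supset h_\ell)$ in $\Gamma s \cup \Gamma \overline{s}$ with $m \in h_1 \cap \overline{h_\ell}$. I would bound the number of such segments uniformly in $m$ by a staircase argument in the spirit of \cite[Proposition~3.16]{BFFL}: producing too many distinct length-$\ell$ chains through $m$ in the same $\Gamma$-orbit would force a staircase of length exceeding $\sigma$, yielding the required constant $c$.

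Finally, for any $p \in P(x,y)$ the assignment $e \mapsto h(e)$ puts the edges of $p$ in bijection with the halfspaces in $[x,y]_\Half$, and the $\ell$-fragments with non-zero weight are exactly the $\Half$-segments of length $\ell$ contained in $[x,y]_\Half$ and lying in $\Gamma s \cup \Gamma \overline{s}$. A direct comparison then gives $f_\mathcal{W}(x,y) = f_s(x,y)$, and Theorem~\ref{main:quasimorphi} identifies $f_s$ as a quasimorphism of $\Gamma \curvearrowright X$, as claimed.
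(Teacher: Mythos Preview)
Your proposal is correct and follows essentially the same route as the paper: you set $P(x,y)=[x,y]$, define $\mathcal{W}$ via $\epsilon_s$ applied to the halfspace sequence of a fragment, build $\varphi_{p,q}$ by matching fragments with the same halfspace sequence on the support and extending arbitrarily, and then outsource the finiteness condition to the staircase bound from~\cite{BFFL}---exactly as the paper does (the paper cites \cite[Lemma~3.15]{BFFL} for this step and takes $R=1$ rather than your sharper $R=0$). Your handling of the order-preservation subtlety for $\varphi_{p,q}$ (tight nesting forces the crossing order along any geodesic) is the same observation the paper isolates in Remark~\ref{rem:why}.
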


\begin{proof}
	With a slight abuse of notation, we also denote by $X=(V,E)$ the underlying median graph given by the 1-skeleton of $X$.
	Every oriented edge $e=(\alpha(e),\omega(e))$ is dual to exactly one halfspace $h_e$ that contains $\omega(e)$.This gives rise to a map 
	\begin{align*}
		\lambda\colon \Eorl&\to X_\Half^\ell\\
		(e_1,\ldots,e_\ell)&\mapsto (h_{e_1},\ldots,h_{e_\ell}).
	\end{align*}
	Note that the image of this map might contain sequences that are not tightly nested. For $x,y\in V$ and $p\in [x,y]$, we consider the restriction 
	\begin{align*}
		\lambda|_{ p^{(\ell)}} \colon p^{(\ell)} \to [x,y]_\Half^{\ell}.
	\end{align*}
	At first, we check that the image of this map indeed lies in $[x,y]_\Half^{\ell}$: For an edge $e=(\alpha,\omega)$ of $p$ with the orientation given by $p$ we have $x\notin h_e$ and $y\in h_e$. Hence, we deduce $h_e\in[x,y]_\Half$. Furthermore, this map is injective, as there is a 1-to-1 correspondence between the edges on $p$ and $[x,y]_\Half$, see \cite[Lemma~3.2]{BFFL}.
	
	We now define
	\begin{align*}
		\mathcal{W}\colon \Eorl&\to \lbrace+1,-1,0\rbrace\\
		a&\mapsto \begin{cases}
			\epsilon_s(\lambda(a))& \textrm{if } \lambda(a)\in X_\Half^{(\ell)},\\
			0& \textrm{otherwise}. 
		\end{cases}
	\end{align*}
	
	We want to prove that $\mathcal{W}$ is an \weight\ so that we can conclude by showing that the median quasimorphism $f_s$ corresponds with the weight quasimorphism $f_\mathcal{W}$.
	At first we construct a family $P=(P(x,y))_{x,y\in V}$ of paths by $P(x,y) =[x,y]$.
	As the 1-skeleton of a $\cat0$ cube complex forms a median graph we can deduce that $P$ fulfils the quasi-median property for $R=1$. Furthermore, it is coherent by the properties of geodesics. Next, we fix two vertices $x,y\in V$. 
	
	For two geodesics $p,q$ from $x$ to $y$, we consider the injective map
	\begin{align*}
		\lambda|_{ p^{(\ell)}} \colon p^{(\ell)} \to [x,y]_\Half^{\ell}.
	\end{align*}
	We prove $[x,y]_\Half^{(\ell)} \subset \im (\lambda|_{ p^{(\ell)}})$: For~$(h_1\supset\cdots\supset h_\ell)\in [x,y]_\Half^{(\ell)}$ and the corresponding oriented edges $e_1,\ldots,e_\ell$ on $p$, it is clear that for $i,j\in \lbrace 1,\ldots, \ell\rbrace$ with $i<j$, the edges $e_i$ and $e_j$ appear in this order on $p$, as the dual halfspaces fulfil the relation $h_i\supset h_j$. So $(e_1,\ldots,e_\ell)\in p^{(\ell)}$.
	
	This allows us to fix a bijection
	\begin{align*}
		\varphi_{p,q}\colon p^{(\ell)} \to q^{(\ell)} 
	\end{align*} 
	that satisfies $\varphi_{p,q}(t) =t'$ if $\lambda(t) = \lambda(t')\in [x,y]_\Half^{(\ell)}$.
	In particular, this means for $t\in p^{(\ell)}$ that $\lambda(t)\in [x,y]_\Half^{(\ell)}$ if and only if~$\lambda(\varphi_{p,q}(t))\in [x,y]_\Half^{(\ell)}$.
	
	We fix
	\begin{align*}
		\Phi = (\varphi_{p,q})_{(p,q)\in \bigcup\limits_{{(x,y)\in V^2}} P(x,y)^2}
	\end{align*}
	and obtain a coherent pair $(P,\Phi)$ of size $\ell$.
	
	It remains to prove that $\mathcal{W}$ is an \weight. For this we check that $\mathcal{W}$ 
	\begin{enumerate}
		\item is $\Gamma$-invariant, since the map $\epsilon_s$ is $\Gamma$-invariant;
		\item is alternating, since $\epsilon_s$ is alternating on sequences of halfspaces;
		\item is bounded by $1$;
		\item is path-independent with respect to $\Phi$: Let $x,y\in V$, $p,q\in P(x,y)$ and $t\in p^{(\ell)}$. If $\lambda(t)\notin [x,y]_\Half^{(\ell)}$, it is $\lambda(\varphi_{p,q}(t))\notin [x,y]_\Half^{(\ell)}$ and hence, $\mathcal{W}(t) = \mathcal{W}(\varphi_{p,q}(t)) = 0$. On the other hand, if $\lambda(t)\in [x,y]_\Half^{(\ell)}$, we know that $\lambda(\varphi_{p,q}(t))=\lambda(t)$ and hence, $\mathcal{W}(t) = \mathcal{W}(\varphi_{p,q}(t))$; and 
		\item fulfils the finiteness condition: 
		Let $x,y\in V$, $p\in P(x,y)$ and $m$ be a vertex on $p$. At first, we notice that an element $a\in p^{(\ell)}\cap \supp(\mathcal{W})$ fulfils $\lambda(a)\in [x,y]_\Half^{(\ell)}$ and it contains $m$ in its interior if $\lambda(a)$ contains $m$ in its interior. Using that the $\cat0$ cube complex has finite dimension and is of finite staircase length allows us to conclude that there exists a constant $c\in \N$ independent of the choice of $x,y$ and the path $p\in P(x,y)$ such that there are at most $c$ such segments in~$[x,y]_\Half^{(\ell)}$~\cite[Lemma~3.15]{BFFL}. The injectivity of $\lambda$ on $p^{(\ell)}$ proves the finiteness condition.
	\end{enumerate}
	
	Let $f_\mathcal{W}$ be the corresponding weight quasimorphism. It remains to prove $f_s=f_\mathcal{W}$. For this, let $x,y\in V$ and $p_{xy}\in[x,y]$. Then we compute 
	\begin{align*}
		f_{\mathcal{W}}(x,y) = \sum_{\mathclap{a\in p_{xy}^{(\ell)}}}\mathcal{W}(a) = \sum_{\mathclap{a\in p_{xy}^{(\ell)}}}\epsilon_s(\lambda(a) ) =  \sum_{\mathclap{{t\in [x,y]_\Half^{(\ell)}}}}\epsilon_s(t) = f_s(x,y).
	\end{align*}
	The third equality uses 
	\begin{itemize}
		\item the injectivity of $\lambda$,
		\item the fact that $\lambda(p_{xy}^{(\ell)}) \subset [x,y]^\ell$, and
		\item that the support of $\epsilon_s$ is contained in $X_\Half^{(\ell)}$.
	\end{itemize}
\end{proof}

\begin{remark}
	\label{rem:why}
	As we see in Figure~\ref{subfig:segmentvsfragment}, for two vertices $u,v\in V$ and a geodesic $p\in[x,y]$, a $\Half$-segment in $ [x,y]^{(l)}$ might correspond to a fragment of $p^{(l)}$ whose elements are not directly consecutive edges.
	This is why we constructed weight quasimorphisms as weighted sums over \emph{fragments} of paths. 
	
	We also want to mention that for $x,y\in V$ and $p,q\in[x,y]$ there is an obvious bijection between the edges of $p$ and $q$ that assigns two edges to each other if their corresponding halfspaces coincide. Unfortunately, this map on the level of edges does not induce a well defined map $p^{(l)}\to q^{(l)}$ since it is not necessarily order preserving, see Figure~\ref{subfig:segment}. This is why we needed to fix bijections on the level of \step s in Definition~\ref{def:coherentpair} of coherent pairs. Furthermore, this example also explains why the definition of the map $\varphi_{pq}$ in the proof of Theorem~\ref{thm:medianqm} needs to be so abstract. 
\end{remark}

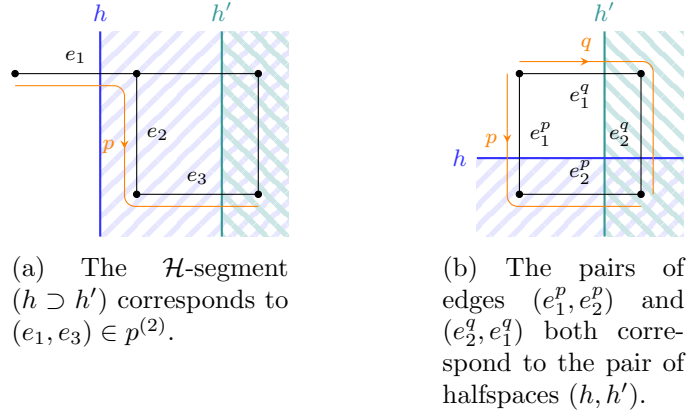
\begin{figure}[h] 
	\begin{center}
		\subcaptionbox{The $\Half$-segment $(h\supset h')$ corresponds to $(e_1,e_3)\in p^{(2)}$.%
			\label{subfig:segmentvsfragment}}{%
			\begin{tikzpicture}[>=stealth, scale = 0.8]
				\newcommand{\arrowIn}{\tikz \draw[-stealth] (-1pt,0) -- (1pt,0);}
				
				\fill [pattern={Lines[
					distance=2mm,
					angle=45,
					line width=0.7mm
					]},
				pattern color=blue!10
				] (-1.6,1.7) rectangle (1.5,-1.7);
				\draw[-,blue!80,thick] (-1.6,1.7) to (-1.6,-1.7);
				\node[above, blue!80] at (-1.6,1.7) {\footnotesize$h$};
				
				\fill [pattern={Lines[
					distance=2mm,
					angle=-45,
					line width=0.7mm
					]},
				pattern color=teal!20
				] (0.4,1.7)  rectangle (1.5,-1.7);
				\draw[-,teal!80,thick] (0.4,1.7) to (0.4,-1.7);
				\node[above, teal!80] at (0.4,1.7) {\footnotesize$h'$};	
				
				\coordinate (a) at (-1,-1);
				\coordinate (b) at (1,-1);
				\coordinate (c) at (1,1);
				\coordinate (d) at (-1,1);
				\coordinate (e) at (-3,1);
				
				\filldraw (a) circle (1.5pt);
				\filldraw (b) circle (1.5pt);
				\filldraw (c) circle (1.5pt);
				\filldraw (d) circle (1.5pt);
				\filldraw (e) circle (1.5pt);
				
				\draw[-] (-1,-1) to node[right] {\scriptsize$e_2$} (-1,1);
				\draw[-] (-1,-1) to node[above] {\scriptsize$e_3$} (1,-1);
				\draw[-] (1,-1) to (1,1);
				\draw[-] (-1,1) to (1,1);
				\draw[-] (-1,1) to node[above] {\scriptsize$e_1 $} (-3,1);
				\draw [orange, rounded corners] (-3,0.8) --  (-1.2,0.8) -- node[sloped, left, rotate=90] {\scriptsize$p$} node[sloped, pos=0.5, allow upside down]{\arrowIn} (-1.2, -1.2) -- (1,-1.2); 
		\end{tikzpicture}}
		\hfil
		\subcaptionbox{The pairs of edges $(e_1^p,e_2^p)$ and $(e_2^q,e_1^q)$ both correspond to the pair of halfspaces $(h,h')$.\label{subfig:segment}}{%
			\begin{tikzpicture}[>=stealth, scale = 0.8]
				\newcommand{\arrowIn}{\tikz \draw[-stealth] (-1pt,0) -- (1pt,0);}
				
				\fill [pattern={Lines[
					distance=2mm,
					angle=45,
					line width=0.7mm
					]},
				pattern color=blue!10
				] (1.7,-0.4) rectangle (-1.7,-1.7);

				\fill [pattern={Lines[
					distance=2mm,
					angle=-45,
					line width=0.7mm
					]},
				pattern color=teal!20
				] (0.4,1.7)  rectangle (1.7,-1.7);
				
				\draw[-,teal!80,thick] (0.4,1.7) to (0.4,-1.7);
				\node[above, teal!80] at (0.4,1.7) {\footnotesize$h'$};	
				
				\draw[-,blue!80,thick] (1.7,-0.4) to (-1.7,-0.4);
				\node[left, blue!80] at (-1.7,-0.4) {\footnotesize$h$};	
				
				\coordinate (a) at (-1,-1);
				\coordinate (b) at (1,-1);
				\coordinate (c) at (1,1);
				\coordinate (d) at (-1,1);
				
				\filldraw (a) circle (1.5pt);
				\filldraw (b) circle (1.5pt);
				\filldraw (c) circle (1.5pt);
				\filldraw (d) circle (1.5pt);
				
				\draw[-] (-1,-1) to node[right] {\scriptsize$e_1^p$} (-1,1);
				\draw[-] (-1,-1) to node[above] {\scriptsize$e_2^p$} (1,-1);
				\draw[-] (1,-1) to node[left] {\scriptsize$e^q_2$} (1,1);
				\draw[-] (-1,1) to node[below] {\scriptsize$e^q_1$} (1,1);
				\draw [orange, rounded corners] (-1.2,1) -- node[sloped, left, rotate=90] {\scriptsize$p$} node[sloped, pos=0.5, allow upside down]{\arrowIn} (-1.2, -1.2) -- (1,-1.2); 
				\draw [orange, rounded corners]  (-1,1.2) -- node[sloped, above] {\scriptsize$q$} node[sloped, pos=0.5, allow upside down]{\arrowIn} (1.2, 1.2) -- (1.2,-1); 
		\end{tikzpicture}}
	\end{center}
	\caption{Connection between $\Half$-segments and fragments of paths.} 
\end{figure}

Br\"uck, Fournier-Facio and L\"oh proved a vanishing result for cup products with classes given by median quasimorphism. However, they observed that cup products with classes induced by median quasimorphisms do not vanish in general~\cite[Example~3.19]{BFFL}. This then led to the notion of non-transverse classes.

\begin{definition}[heads/tails]
	Let $s = (h_1 \supset \cdots\supset h_\ell)\in X^{(\ell)}$. We say that $\alpha\in V$ is a \emph{head} of $s$ if $\alpha\in \overline{h_1}$ and there exists an edge dual to $h_1$ that has $\alpha$ as one of its endpoints. We say that $\omega\in V$ is a \emph{tail} of $s$ if $\omega\in h_\ell$ and there exists an edge dual to $h_\ell$ that has $\omega$ as one of its endpoints.
	We let $\alpha(s)$ denote the set of heads of $s$ and we let $\omega(s)$ denote the set of tails of~$s$. By definition, $\alpha(\overline{s}) = \omega(s)$ and $\omega(\overline{s}) = \alpha(s)$.
\end{definition}

\begin{definition}[non-transverse]
	Let $s\in X_\Half^{(\ell)}$ and let $\kappa\in C^n_{\Gamma,b}(X;\R)$. We say that $\kappa$ and $s$ are \emph{non-transverse} if for all $x_1,\ldots,x_n\in V$, the value of $\kappa(\alpha,x_1,\ldots,x_n)$ is constant over all $\alpha\in\alpha(s)$, and the value of $\kappa(\omega,x_1,\ldots,x_n)$ is constant over all $\omega \in \omega(s)$.
	
	Given a set $S\subset X^{(\ell)}_\Half$ we say that $\kappa$ and $S$ are \emph{non-transverse} if $\kappa$ and $s$ are non-transverse for all $s\in S$.
	
	We say that a class in equivariant bounded cohomology has this property if it admits a representative that does.
\end{definition}

We are now able to prove the following theorem, extending the results of Br\"uck, Fournier-Facio, and L\"oh~\cite[Theorem~3.23]{BFFL}.

\begin{theorem}
	\label{thm:median}
	Let $X$ be a finite dimensional $\cat0$ cube complex of finite staircase length $\sigma$. Let $s\in X_\Half^{(l)}$ be tightly nested. 
	
	If $\alpha_1\in H_{\Gamma,b}^{n}(X;\R)$ and $\alpha_2\in H_{\Gamma,b}^{m}(X;\R)$ are non-transverse to $\Gamma s$, then the cup products  $\alpha_1 \cup [\delta^1f_s]$ and $[\delta^1f_s]\cup \alpha_2$ as well as the Massey triple product $\langle \alpha_1, [\delta^1f_s], \alpha_2 \rangle$ are trivial.  
\end{theorem}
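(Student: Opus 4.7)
The plan is to reduce directly to the Main Theorems (Theorems~\ref{main:trivialcupproduct} and~\ref{main:trivialMassey}) using the coherent pair $(P,\Phi)$ and $\ell$-weight $\mathcal{W}$ constructed in the proof of Theorem~\ref{thm:medianqm}; there it was already shown that the associated weight quasimorphism equals $f_s$. Only one thing then remains to check: that every class in $H_{\Gamma,b}^*(X;\R)$ non-transverse to $\Gamma s$ admits a $\Phi$-stable representative in the sense of Definition~\ref{def:Phistablemap}.

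Pick a non-transverse representative $\zeta$ of one of the classes. Fix $x,y\in V$, geodesics $p,q\in[x,y]$, auxiliary vertices $x_1,\ldots,x_n\in V$, and an $\ell$-fragment $a\in p^{(\ell)}\cap\supp(\mathcal{W})$. By construction of $\mathcal{W}$ the segment $\lambda(a)$ lies in $\Gamma s\cup\Gamma\overline{s}$, and by construction of $\varphi_{p,q}$ one has $\lambda(\varphi_{p,q}(a))=\lambda(a)$. Writing $\lambda(a)=(h_1\supset\cdots\supset h_\ell)$, the fragment head $\alpha(a)$ is the starting endpoint of an edge of $p$ dual to $h_1$ lying in $\overline{h_1}$, hence is a head of the segment $\lambda(a)$; the same is true of $\alpha(\varphi_{p,q}(a))$, and an analogous statement holds for the tails and $h_\ell$. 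Since non-transversality of $\zeta$ to $s$ is directly equivalent to non-transversality to $\overline{s}$ (heads and tails are simply swapped under complementation), $\zeta$ is non-transverse to $\lambda(a)$, forcing
\[
\zeta(\alpha(a),x_1,\ldots,x_n)=\zeta(\alpha(\varphi_{p,q}(a)),x_1,\ldots,x_n)
\]
and symmetrically for $\omega$. This is $\Phi$-stability of $\zeta$, and the Main Theorems then deliver the vanishing of both cup products and of the Massey triple product.

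The main subtlety to watch is the one flagged in Remark~\ref{rem:why}: the bijection $\varphi_{p,q}$ cannot be defined edge-by-edge because the same $\mathcal{H}$-segment can be realised by fragments whose edges appear in different relative orders on $p$ and $q$, while non-transversality is a condition on heads and tails of segments, not of fragments. The argument above goes through precisely because the fragment-level endpoints $\alpha(a),\omega(a)$ coincide with the segment-level heads and tails of $\lambda(a)$, which is exactly the bridge that lets non-transversality imply $\Phi$-stability.
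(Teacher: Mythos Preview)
Your proof is correct and follows essentially the same route as the paper: reuse the coherent pair $(P,\Phi)$ and $\ell$-weight $\mathcal{W}$ from Theorem~\ref{thm:medianqm}, verify that a non-transverse representative $\zeta$ is $\Phi$-stable by using $\lambda(\varphi_{p,q}(a))=\lambda(a)$ on $\supp(\mathcal{W})$ together with the observation that fragment heads/tails are segment heads/tails, and then invoke Theorems~\ref{main:trivialcupproduct} and~\ref{main:trivialMassey}. You are in fact a bit more explicit than the paper in two places --- handling the possibility $\lambda(a)\in\Gamma\overline{s}$ via $\alpha(\overline{s})=\omega(s)$, and spelling out why $\alpha(a)\in\alpha(\lambda(a))$ --- but the argument is otherwise identical.
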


\begin{proof}
	Again by a slight abuse of notation we denote by $X=(V,E)$ the median graph given by the 1-skeleton of $X$.
	We consider the coherent family $(P,\Phi)$ and the \weight\ $\mathcal{W}$ as in the proof of Theorem~\ref{thm:medianqm}. Moreover, we recall that in this setting, the median quasimorphism $f_s$ corresponds with the weight quasimorphism for $\mathcal{W}$.
	For the vanishing of the products, we need to show that every class~$\alpha\in H_{\Gamma,b}(X;\R)$ that is non-transverse to $\Gamma s$ is also \stable. Then, the vanishing results follow from Theorems~\ref{main:trivialcupproduct} and~\ref{main:trivialMassey}.
	
	Let $\zeta\in C_{\Gamma,b}^n(X;\R)$ be a representative of $\alpha$ that is non-transverse to $\Gamma s$, i.e., for all $\gamma\in \Gamma$ and $x_1,\ldots,x_n\in V$, the values
	\begin{align*}
		\zeta(\alpha, x_1,\ldots,x_n) \text{ and } \zeta(\omega,x_1,\ldots,x_n)
	\end{align*}
	do not depend on the choice of head $\alpha\in\alpha(\gamma s)$ and tail $\omega\in \omega(\gamma s)$.
	In order to show that $\zeta$ is \stable, let $x,y\in V$ and $p,q\in P(x,y)$. Furthermore, let $x_1,\ldots,x_n\in V$. If $a\in p^{(\ell)}\cap\supp(\mathcal{W})$ we know that $\lambda(a) = \lambda(\varphi_{p,q}(a))$ and so $\alpha(a)$ and $\alpha(\varphi_{p,q}(a))$ are both heads of the $\Half$-segment $\lambda(a)$.
	This shows
	\begin{align*}
		\zeta(\alpha(a),x_1,\ldots,x_n) = \zeta(\alpha(\varphi_{p,q}(a)),x_1,\ldots,x_n).
	\end{align*}
	The same arguments show the equality for the tails. So $\zeta$ is a \stable\ cocycle.
\end{proof}

\begin{remark}
	Apart from the finite staircase length, there is a second option to obtain finiteness results for computations in $\cat0$ cube complexes, which is using \"uber-parallel nested tuples of halfspaces instead of tightly nested ones~\cite{CFI}. Considering median quasimorphisms that count occurences of a fixed \"uber-parallel tuple, one should also be able to obtain similar results by using the finiteness results developed by Chatterji, Fern\'os, and Iozzi~\cite[Proof of Proposition~3.4]{CFI}.
\end{remark}


\end{document}